\documentclass[11pt,reqno]{amsart}
\usepackage{amsmath,amssymb,amsthm}
\usepackage{enumitem}
\usepackage{pgfplots}
\usepgfplotslibrary{fillbetween}
\pgfplotsset{compat=1.18}
\usepackage{hyperref}
\usepackage{doi}

\newtheorem{theorem}{Theorem}
\newtheorem{lemma}[theorem]{Lemma}
\newtheorem{corollary}[theorem]{Corollary}

\theoremstyle{definition}
\newtheorem{example}[theorem]{Example}

\theoremstyle{remark}
\newtheorem{remark}[theorem]{Remark}

\newcommand{\iterPhi}[1]{\Phi^{\circ #1}}

\numberwithin{equation}{section}

\renewcommand{\leq}{\leqslant}
\renewcommand{\geq}{\geqslant}

\begin{document}

\title[Eventually greedy best Egyptian underapproximations]{Eventually greedy best Egyptian underapproximations of rational numbers via optimal control}

\author[V.~Kova\v{c}]{Vjekoslav Kova\v{c}}
\address{University of Zagreb Faculty of Science, Department of Mathematics, Bijeni\v{c}ka cesta 30, 10000 Zagreb, Croatia}
\email{vjekovac@math.hr}

\author[Q.~Tang]{Quanyu Tang}
\address{School of Mathematical Sciences, University of Science and Technology of China, Hefei 230026, P. R. China}
\email{tangquanyu827@gmail.com}

\subjclass[2020]{Primary 11D68; Secondary 49N90}
%Number theory - Rational numbers as sums of fractions
%Calculus of variations and optimal control; optimization - Applications of optimal control and differential games
\keywords{Egyptian fraction, unit fraction, greedy algorithm, optimal control, Bellman function}

\begin{abstract}
We prove that every positive rational number has eventually greedy best Egyptian underapproximations, both when repetitions of the denominators are allowed and when the denominators are required to be distinct. This answers affirmatively a problem originating with Erd\H{o}s and Graham and later revisited by Nathanson, and yields an application concerning the maximal asymptotic growth of denominators in unit fraction series converging to a given rational number. We reformulate the question as an optimal control problem for a dynamical system, construct an appropriate payoff function, and study properties of the associated Bellman function. We also answer another question of Nathanson by constructing an irrational number with unique and greedy best Egyptian underapproximations.
\end{abstract}

\maketitle

%\tableofcontents

\section{Introduction}
A finite sum of distinct unit fractions with integer denominators,
\[
\frac{1}{a_1}+\frac{1}{a_2}+\cdots+\frac{1}{a_n},
\quad
2\leq a_1<a_2<\cdots<a_n,
\]
is commonly called an \emph{Egyptian fraction}. An early systematic discussion of such sums appears in Fibonacci's \emph{Liber Abaci}, written in $1202$, which describes a finite greedy procedure for decomposing every positive proper rational number into distinct unit fractions \cite{Fibonacci}. Nowadays, problems concerning such sums often require nontrivial tools from combinatorial number theory, Fourier analysis, or Diophantine approximation. Unit fractions were also of particular interest to Paul Erd\H{o}s; surveys from that viewpoint were written by Schinzel \cite{Schinzel}, Graham \cite{Graham}, and Bloom and Elsholtz \cite{BloomElsholtz}. Since there has been some even more recent progress on the topic, we advise the reader to track the ongoing developments on Bloom's website \emph{Erd\H{o}s Problems} \cite{Bloom}.

One line of research concerns best and greedy underapproximations. An \emph{Egyptian underapproximation} of a real number $\lambda>0$ is a finite sum of distinct unit fractions that is strictly smaller than $\lambda$. The largest such admissible sum with $n$ terms is called the \emph{best $n$-term Egyptian underapproximation} of $\lambda$. The number $\lambda=1$ has the property that its best underapproximations are formed by successively taking the denominators from the Sylvester sequence \cite{Sylvester},
\[
a_1=2,
\quad
a_{n+1}=a_n^2-a_n+1 \quad\text{for } n\geq1.
\]
The corresponding best $n$-term underapproximations for $n=1,2,3,\ldots$ are
{\allowdisplaybreaks\begin{align*}
\frac12&<1,\\
\frac12+\frac13&=\frac56<1,\\
\frac12+\frac13+\frac17&=\frac{41}{42}<1,\\
\frac12+\frac13+\frac17+\frac1{43}&=\frac{1805}{1806}<1,\ldots.
\end{align*}}
This property was conjectured by Miller \cite{Miller} and Kellogg \cite{Kellogg} and proved independently by Curtiss \cite{Curtiss} and Takenouchi \cite{Takenouchi}; also see Soundararajan \cite{Soundararajan}.
Starting from a different positive real number $\lambda$, one can instead construct Egyptian underapproximations recursively. Given $\sum_{i=1}^n1/a_i$, the \emph{greedy} choice is the unit fraction $1/a_{n+1}$ with the smallest denominator $a_{n+1}$ larger than $a_n$ for which one still has $\sum_{i=1}^{n+1}1/a_i<\lambda$. It is then natural to ask whether these greedy underapproximations coincide with the best ones.
Erd\H{o}s observed that this phenomenon, in addition to $1$, also holds for every unit fraction \cite{Erdos}, while Nathanson \cite{Nathanson} and Chu \cite{Chu} extended it further to certain other rationals.
For a general rational number, however, the greedy choice need not be optimal. For example, the greedy two-term underapproximation of $10/61$ is
\[
\frac17+\frac1{48},
\]
whereas
\[
\frac19+\frac1{19} \in \Bigl( \frac17+\frac1{48}, \frac{10}{61} \Bigr)
\]
is a better underapproximation of the same number.

It is therefore natural to ask whether all positive rational numbers $\lambda$ have \emph{eventually greedy} best Egyptian underapproximations. In other words, even if the greedy algorithm fails to give the best underapproximation for small values of $n$, it is plausible that, for all sufficiently large $n$, a best $(n+1)$-term underapproximation is obtained from a best $n$-term underapproximation $\sum_{i=1}^n1/a_i$ by appending one further unit fraction $1/a_{n+1}$ chosen greedily. Erd\H{o}s and Graham stated the corresponding assertion in \cite[p.~31]{ErdosGraham}, but gave neither a proof nor a reference. Graham later treated it explicitly as an open question \cite[pp.~295--296]{Graham}:
\begin{quote}
(\ldots) \emph{is it true that for any rational $\frac{a}{b}$, the closest strict under approximation $R_n(\frac{a}{b})$ of $\frac{a}{b}$ by a sum of $n$ unit fractions is given by
\[
R_n\Bigl(\frac{a}{b}\Bigr) = R_{n-1}\Bigl(\frac{a}{b}\Bigr) + \frac{1}{m}
\]
where $m$ is the least denominator not yet used for which $R_n(\frac{a}{b})<\frac{a}{b}$, provided that $n$ is sufficiently large?}
\end{quote}
The same open problem was later posed by Nathanson \cite[Open problem~(4)]{Nathanson}, one of the present authors \cite[p.~42]{Kovac}, and Li and one of the present authors \cite[Conjecture~1.5]{LiTang}. It is also listed as a remaining open question in the comments accompanying \cite[Problem~\#206]{Bloom}. In the opposite direction, one of the present authors showed that the set of real numbers with the eventual greediness property has Lebesgue measure zero \cite[Theorem~1]{Kovac}. Since the rational numbers form a null set, that result remains compatible with the possibility that every rational number has the property. Finally, the papers \cite{Nathanson} and \cite{LiTang} both work with a convention that permits equal denominators. In the latter work, the above conjecture was used as a hypothesis in a study of asymptotic extremality for Sylvester-type sequences. Since much of the literature on Egyptian fractions instead requires the denominators to be distinct, we study both conventions: we consider separately nondecreasing and strictly increasing denominator tuples, and continue to use the term ``Egyptian underapproximation'' under both conventions.

Throughout, $\mathbb{N}=\{1,2,\ldots\}$, and all logarithms are natural. For $n\geq1$, set
\begin{align*}
\mathcal{E}_n^{\leq}
&:=\bigl\{(a_1,\ldots,a_n)\in\mathbb{N}^n:
2\leq a_1\leq\cdots\leq a_n\bigr\},\\
\mathcal{E}_n^{<}
&:=\bigl\{(a_1,\ldots,a_n)\in\mathbb{N}^n:
2\leq a_1<\cdots<a_n\bigr\}.
\end{align*}
The superscripts $\leq$ and $<$ will remind us, respectively, of nondecreasing and strictly increasing denominators.
For $\lambda>0$ and $\sigma=\leq$ or $\sigma=<$, define
\[
R_n^{\sigma}(\lambda)
=\max\left\{
\sum_{i=1}^n\frac1{a_i} \,:\,
(a_1,\ldots,a_n)\in\mathcal{E}_n^{\sigma},\ \
\sum_{i=1}^n\frac1{a_i}<\lambda
\right\}.
\]
It is easy to justify that this maximum exists; see \cite[Theorem~3]{Nathanson}.
A tuple in $\mathcal{E}_n^\sigma$ attaining it will be called a \emph{maximizing $n$-term denominator tuple}.
The quantity $R_n^{<}(\lambda)$ is what Graham simply wrote as $R_n(\lambda)$.

For a number $u>0$, the smallest integer $x\geq2$ for which $1/x<u$ is
\begin{equation}\label{eq:def-of-G}
G(u) := \max\left\{ \Bigl\lfloor\frac{1}{u}\Bigr\rfloor+1,\, 2 \right\}.
\end{equation}
Thus, $1/G(\lambda)$ is trivially the best one-term Egyptian underapproximation of $\lambda>0$.
Two-term best underapproximations have been studied by Nathanson \cite{Nathanson} and Chu \cite{Chu}.
Nathanson's formulation \cite[Open problem~(4)]{Nathanson} of the Erd\H{o}s--Graham problem asserts that for every rational $\lambda\in(0,1]$ there is an integer $n_0\geq1$ such that, for every $m\geq1$,
\[
R_{n_0+m}^{\leq}(\lambda)
=R_{n_0}^{\leq}(\lambda)
+R_m^{\leq}\bigl(
\lambda-R_{n_0}^{\leq}(\lambda)
\bigr),
\]
and that the second term on the right is obtained by the greedy underapproximation algorithm.

Our main result proves this conjecture and its strictly increasing counterpart for all positive rational numbers.

\begin{theorem}
\label{thm:main}
Let $\sigma$ be either $\leq$ or $<$.
For every positive rational number $\lambda$, there is an integer $n_0=n_0(\lambda,\sigma)\geq1$ such that, for every $m\geq1$,
\begin{equation}
R_{n_0+m}^{\sigma}(\lambda)
=R_{n_0}^{\sigma}(\lambda)
+R_m^{\sigma}\bigl(
\lambda-R_{n_0}^{\sigma}(\lambda)
\bigr),
\label{eq:main-decomposition}
\end{equation}
and the greedy $m$-term denominator tuple is the unique maximizing tuple for the remainder $\lambda-R_{n_0}^{\sigma}(\lambda)$. Moreover, one can choose a maximizing denominator tuple for $R_{n_0}^{\sigma}(\lambda)$ such that, for every $m\geq1$, adjoining the first $m$ greedy denominators produces a maximizing tuple for $R_{n_0+m}^{\sigma}(\lambda)$.
\end{theorem}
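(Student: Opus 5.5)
The plan is to recast the problem as an optimal control problem on the ``state'' given by the current remainder and the last denominator used. After choosing $a_1,\ldots,a_k$, the state is $(r,a)$ with $r=\lambda-\sum_{i\leq k}1/a_i>0$ and $a=a_k$. For $\sigma=<$ the admissible controls are integers $x>a$ with $1/x<r$; for $\sigma=\leq$ they are $x\geq a$. The state then moves to $(r-1/x,x)$. The greedy control is $x=\max\{G(r),a+1\}$ (resp.\ $\max\{G(r),a\}$). Maximizing $\sum 1/a_i$ is the same as minimizing the final remainder. Because $\lambda=p/q$ is rational, every remainder has the form $N/(q\cdot\mathrm{lcm}(a_1,\ldots,a_k))$ with $N\geq 1$. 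So the remainder after $k$ terms is at least $1/(q\prod a_i)$, and the natural objective is the size of the denominator, that is, $\log(1/r)$.

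The key construction is an explicit payoff (Bellman-type) function $\Phi(r,a)$. It should satisfy $\Phi(r,a)\geq \log\frac1{r_{\mathrm{final}}}$ for every continuation, with equality along the greedy continuation, at least in the regime where $r\cdot a$ is small, i.e.\ $r<1/a$. In that regime the constraint $x>a$ is not binding and greedy gives $x=G(r)\approx 1/r$. Iterating produces a Sylvester-like sequence whose remainders satisfy $r_{j+1}\leq r_j/(x_j(x_j-1))\cdot(\ldots)$, with doubly exponential decay. The candidate is something like $\Phi(r)=\lim_m 2^{-m}\log(1/r_m^{\mathrm{greedy}})$, compared with rescaled comparison quantities $2^{-m}\log\frac1{r_m}$ for arbitrary continuations. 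The main inequality to prove is a one-step inequality: for every admissible $x$,
\[
\Phi(r-1/x)\leq 2\,\Phi(r),
\]
in the normalized sense, with strict inequality for nongreedy $x$ once the state is ``generic''. This is the step I expect to be the main obstacle. One has to show that choosing a larger $x$ than greedy loses a definite factor in the doubly exponential rate. That loss has to beat any possible gain from a lucky arithmetic cancellation that makes a later remainder abnormally small. Rationality is what rules out such cancellation: every remainder is bounded below by $1/(q\prod a_i)$, whereas greedy achieves roughly $1/\prod a_i^{\,}$ up to constant factors. I would first try to establish the inequality $\prod_{i\leq k}x_i\lesssim (\text{greedy product})^{1-\varepsilon}$ for any nongreedy deviation at a late stage, via the identity $r-1/x$ and the elementary bound $1/r_{\text{new}}\leq x(x r\cdot\ldots)$.

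With $\Phi$ in hand, the argument for the theorem runs as follows. First, the optimal value sequence for $\lambda$ has remainders $\lambda-R_n^\sigma(\lambda)$ that tend to zero doubly exponentially, and every optimal tuple satisfies $a_n\to\infty$. Second, consider states arising from optimal $n$-term tuples. These are finitely many at each level, by Nathanson's existence argument. By a compactness/pigeonhole argument there is a consistent choice of optimal prefixes, and beyond some $n_0$ the state enters the ``good'' regime where the Bellman inequality is strict for every nongreedy control. Third, a dynamic programming step: if some optimal $(n_0+m)$-tuple deviated from greedy at a step after $n_0$, then replacing the tail by greedy would strictly increase the sum by the strict Bellman inequality, which is a contradiction. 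Hence the tail is greedy and unique, and this gives \eqref{eq:main-decomposition}. The final assertion comes from choosing the maximizing $n_0$-prefix, among the finitely many, that gives the smallest remainder; it then dominates all competitors for every $m$ simultaneously. The $\leq$ and $<$ cases differ only in the control set, and the analysis is the same once $a<1/r$.
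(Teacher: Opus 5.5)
The gap is the one-step inequality your whole argument rests on: it is false. Take your $\Phi$ to be the normalized rate of the greedy continuation, and start at $\lambda=10/61$, where the constraint $x>a$ does not bind at all. Greedy picks $7,48,4100$ and reaches the unit-fraction remainder $1/21008400$. Picking $9$ first and then continuing greedily picks $19,5216$ and reaches $1/54408096$. Write $\mathcal{H}(t)=\lim_j 2^{-j}\log$ of the $j$-th iterate of $t\mapsto t(t+1)$. Then
\[
\Phi(10/61-1/9)=\tfrac14\mathcal{H}(54408096)>\tfrac14\mathcal{H}(21008400)=2\Phi(10/61).
\]

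Greedy is guaranteed to be Bellman-optimal only from states whose remainder is a unit fraction $1/T$ with $T$ larger than the last denominator (Curtiss--Takenouchi/Nathanson). From other rational remainders it can fail, as here, and in general it only has positive drift. So the ``good regime'' is an arithmetic condition, not a size condition like $ra<1$. The real content of the theorem is that some optimal trajectory reaches that regime after finitely many steps, and nothing in your outline establishes this.

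Your ``compactness/pigeonhole'' step presupposes that optimal $n$-tuples for different $n$ are nested, which is the conclusion. A König-type limit only yields a sequence whose prefixes are prefixes of optimal tuples, not optimal tuples themselves. Also, all maximizing $n_0$-tuples have the same remainder, so ``the prefix with the smallest remainder'' selects nothing.

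The paper replaces the greedy value by the true Bellman function: the supremum of $2^{-m}\mathcal{H}(T)$ over all terminal decompositions $\lambda=\sum_{i\leq m}1/x_i+1/T$. The one-step inequality then holds by definition. The hard part becomes finiteness and attainment of this supremum, proved by strong induction on the cleared numerator $P$:
\begin{itemize}
\item A block of controls along which the cleared numerator never drops below $P$ is dominated by $\mathcal{H}(Q/P)$, via a product--sum (Karamata) comparison.
\item At the first drop to $d<P$, the reciprocal remainder is below $P/d$ times the $r$-th iterate of $t\mapsto t(t+1)$.
\item The discount $2^{-r}$ then caps the relevant block lengths, leaving finitely many candidate blocks.
\end{itemize}

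Your tail-replacement step has a second gap. A competing $(n_0+m)$-tuple need not share your prefix, so comparing continuations of one fixed prefix says nothing about it. The paper instead completes an arbitrary competitor with error $h<1/T_k$ greedily into a terminal decomposition. Since greedy steps never decrease discounted payoff, this gives $\mathcal{B}_\sigma(p,q,0)\geq 2^{-n}\mathcal{H}(1/h)>2^{-n}\mathcal{H}(T_k)$, a contradiction.

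In the strictly increasing convention the greedy completion can hit denominators already used by the competitor. These must be deleted and the completion restarted, at a cost of up to $2^n$ in the error. That cost is absorbed only because $\log T_k$ eventually exceeds $2n\log 2$. So the two conventions do not differ ``only in the control set''.
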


Most of the paper is dedicated to the proof of Theorem~\ref{thm:main}.
To interest the reader, we mention that the techniques will be quite unusual for this context, as we will reformulate the theorem as a dynamical optimization problem, and thus use the methods and language of the theory of optimal control.
However, no prior knowledge of the control of dynamical systems will be needed, and all required definitions, concepts, and ideas will be given in Section~\ref{sec:scheme}.

Erd\H{o}s and Graham in fact suspected that all positive algebraic numbers could possess the same property \cite[p.~31]{ErdosGraham}; we are not in a position to resolve this variant of the problem here. Furthermore, Nathanson \cite[Open problem~(1)]{Nathanson} asked whether there exist irrational numbers whose greedy underapproximations are uniquely best for any fixed number of terms. The following example gives an affirmative answer.

\begin{example}\label{ex:Liouville}
The number
\[
\theta := \sum_{n=1}^{\infty} \frac{1}{2^{n!}}
= \frac{1}{2} + \frac{1}{4} + \frac{1}{64} + \cdots
\]
is Liouville (and thus irrational) and has a unique best $n$-term Egyptian underapproximation for every $n\geq1$, namely its $n$-term greedy underapproximation, in both the nondecreasing and strictly increasing denominator conventions.
\end{example}

The claim from Example~\ref{ex:Liouville} is proved in Section~\ref{sec:example}; parts of the argument resemble the proof of Theorem~3.5 in \cite{Chu2}. This explicit irrational example is noteworthy because, as recalled above, almost every real number fails to have the eventual greediness property \cite[Theorem~1]{Kovac}.

The proof of Theorem~\ref{thm:main} in the case of nondecreasing denominators starts with a positive rational number $\lambda$ and constructs an integer $N\geq1$, a maximizing tuple $(b_1,\ldots,b_N)$, and an integer $T_0\geq2$ such that
\[
\lambda=\sum_{i=1}^N\frac1{b_i}+\frac1{T_0}
\]
and the following property holds.
If we define recursively
\begin{equation}
T_{j+1}=T_j(T_j+1)\quad\text{for } j\geq0,
\label{eq:unit-tail-recurrence}
\end{equation}
and then extend the terms $b_1,b_2,\ldots,b_N$ into the full infinite sequence $(b_n)_{n\geq1}$ by
\[
b_{N+j+1}=T_j+1\quad\text{for } j\geq0,
\]
then
\[
\sum_{n=1}^{\infty}\frac1{b_n}=\lambda,
\quad
\sum_{i=1}^n\frac1{b_i}=R_n^{\leq}(\lambda)
\]
for every $n\geq N$.
Moreover, the constructed sequence $(b_n)_{n\geq1}$ grows double exponentially, the limit $\lim_{n\to\infty}b_n^{2^{-n}}$ exists, and it is finite and strictly positive; see the classical analysis of recurrent sequences by Aho and Sloane \cite{AhoSloane}.

\begin{corollary}
\label{cor:li-tang}
Let $\lambda>0$ be rational and suppose that, for every positive integer $n$, the maximizing $n$-term denominator tuple for $R_n^{\leq}(\lambda)$ is unique. Let $(b_n)_{n\geq1}$ be the sequence defined above, and let $(a_n)_{n\geq1}$ be a sequence of integers satisfying
\begin{equation}\label{eq:equalslambda}
2\leq a_1\leq a_2\leq a_3\leq\cdots,
\quad
\sum_{n=1}^{\infty}\frac1{a_n}=\lambda.
\end{equation}
If $(a_n)_{n\geq1}\neq(b_n)_{n\geq1}$, then
\[
\liminf_{n\to\infty}a_n^{2^{-n}}
<
\lim_{n\to\infty}b_n^{2^{-n}}.
\]

\end{corollary}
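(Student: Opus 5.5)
Assume $(a_n)\neq(b_n)$ and, toward a contradiction, $\liminf_{n} a_n^{2^{-n}}\geq \beta:=\lim_{n} b_n^{2^{-n}}$. The plan uses the tails $r_n:=\lambda-\sum_{i\leq n}1/a_i>0$; they are strictly positive because every term of the series is positive. Since $(a_1,\ldots,a_n)\in\mathcal{E}_n^{\leq}$ and $\sum_{i\leq n}1/a_i<\lambda$, we have
\[
\sum_{i\leq n}\frac1{a_i}\leq R_n^{\leq}(\lambda)=\sum_{i\leq n}\frac1{b_i}
\]
for $n\geq N$, i.e. $r_n\geq \rho_n:=\lambda-\sum_{i\leq n}1/b_i$. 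The sequence $(b_n)$ also has a tail $\sum_{i>n}1/b_i=\rho_n$. Because of the recurrence \eqref{eq:unit-tail-recurrence}, $\rho_{N+j}=1/T_j$ with $T_{j+1}=T_j(T_j+1)$, so $\rho_n\approx 1/b_{n+1}$ and $b_{n+1}\sim \beta^{2^{n+1}}$ up to bounded factors in the exponent.

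\textbf{Step 1 (strict gap).} Uniqueness of the maximizing tuples gives a first index $k$ with $a_k\neq b_k$. If $k\geq N$, then $(a_1,\ldots,a_k)$ is a competitor that differs from the unique maximizer, so $r_k>\rho_k$ strictly. If $k<N$, the same argument at $n=N$ gives $r_N>\rho_N$. Either way there is an index $n_1$ with $r_{n_1}>\rho_{n_1}$, and since both are rationals, $r_{n_1}-\rho_{n_1}\geq 1/Q$ for some $Q$.

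\textbf{Step 2 (the gap propagates to a double-exponential ratio).} At a time $n$ with tail $r$, the nondecreasing constraint and $\sum_{i>n}1/a_i=r$ force $a_{n+1}>1/r$. Write $r_n=1/t_n$ with $t_n$ rational. Using $1/a_{n+1}<r_n$ and $a_{n+1}\leq$ (roughly) $t_n+1$-type bounds, one gets a recursion of the form $t_{n+1}\leq t_n(t_n+1)$ in a suitable sense. What is actually needed is a lower bound on $r_{n+1}$ in terms of $r_n$. Here I would use a rationality/denominator argument: $r_{n+1}=r_n-1/a_{n+1}>0$ has denominator dividing $\mathrm{den}(r_n)\,a_{n+1}$, so $r_{n+1}\geq 1/(\mathrm{den}(r_n)a_{n+1})$. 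Iterating is Sylvester-like but loses too much. Instead, I would apply the main theorem to the remainder $r_{n_1}$ itself. The tail $(a_{n_1+1},\ldots)$ is a nondecreasing series summing to the rational $r_{n_1}$, so its partial sums are bounded by the best underapproximations of $r_{n_1}$. By the construction preceding the corollary applied to $r_{n_1}$, those are eventually governed by a sequence $T'_j$ with $T'_{j+1}=T'_j(T'_j+1)$. This yields $r_{n_1+m}\geq c/T'_{m-m_0}$ for a fixed shift $m_0$.

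\textbf{Step 3 (compare growth constants).} Any sequence obeying $T_{j+1}=T_j(T_j+1)$ satisfies $T_j=\lfloor \gamma^{2^j}\rfloor$-like asymptotics with $\gamma=\lim T_j^{2^{-j}}$. Two such sequences started at different times or values have constants whose logarithms $\log\gamma$ scale by powers of $2$ with the time shift. The key claim is that, because $r_{n_1}>\rho_{n_1}$ (and using Aho--Sloane), the sequence $r_n$ decays strictly slower in the sense $\limsup r_n^{2^{-n}}>\beta^{-2}$. Then $a_{n+1}\leq 1/r_{n+1}+\ldots$ is controlled: since $1/a_{n+1}\geq r_{n+1}$ up to the count of repeated terms, we get $\liminf a_n^{2^{-n}}<\beta$, the desired contradiction.

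\textbf{Main obstacle.} The hardest step is Step 3, proving the strict inequality of growth constants rather than just $\leq$. The first thing to try is the monotonicity of the Aho--Sloane constant in the initial value, $\log\gamma=\log T_0+\sum_j 2^{-j-1}\log(1+1/T_j)$, which is strictly increasing in $T_0$. One then shows that the optimal remainder chain for $r_{n_1}$ at the same time index has strictly smaller effective $T_0$ than that of $(b_n)$. Uniqueness of the maximizers is exactly what makes this effective $T_0$ strictly smaller.
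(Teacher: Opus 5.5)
\paragraph{The strictness mechanism fails.} Your Step~3, and the ``Main obstacle'' paragraph, rest on a false claim. You assert that a strict gap $r_{n_1}>\rho_{n_1}$ at a single index, together with uniqueness, makes the optimal completion of $r_{n_1}$ have a strictly smaller effective $T_0$. Here is a counterexample.

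Take $\lambda=1$. Uniqueness holds there by Curtiss--Takenouchi, and $(b_n)=(2,3,7,43,1807,\ldots)$. Let $a_1=2$ and $a_2=7$. Your Step~1 gives $n_1=2$, with $r_2=5/14>1/6=\rho_2$.

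Now adjoin $2,7$ to any admissible nondecreasing $m$-tuple for $5/14$. This gives an admissible $(m+2)$-tuple for $1$, so $R_m^{\leq}(5/14)\leq 5/14-\rho_{m+2}$. Equality is attained by $(3,43,\ldots,b_{m+2})$. Hence your Step~2 yields exactly $r_{2+m}\geq\rho_{2+m}$. The effective $T_0$ is $42$ in both cases, and no strict inequality can be extracted.

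The underlying reason is that $a_1,\ldots,a_{n_1}$ may form a sub-multiset of the terms of $(b_n)$. In that case the optimal completion of $r_{n_1}$ simply restores the missing $b_i$'s, and uniqueness of maximizers says nothing against this. Separately, the conversion at the end of Step~3 via ``$1/a_{n+1}\geq r_{n+1}$ up to the count of repeated terms'' is not a valid inequality.

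\paragraph{What is missing.} You need an equality-case analysis over all truncation indices. The paper does this, deducing the corollary from Theorem~\ref{thm:asym}, or from Theorem~\ref{thm:main} via \cite[Theorem~1.6]{LiTang}.

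For each $k$, merge $a_1,\ldots,a_k$ with the eventually greedy best expansion $(d^{(k)}_j)$ of $r_k$. This gives a sequence $(c^{(k)}_n)$ with reciprocal sum $\lambda$.

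First, the non-strict bound $\liminf a_n^{2^{-n}}\leq\lim(c^{(k)}_n)^{2^{-n}}$ follows from a clean pointwise comparison. Otherwise $a_{k+j}>d^{(k)}_j$ for all large $j$, so the tails of $(a_{k+j})$ are strictly smaller. By prefix optimality of $d^{(k)}$, the partial sums of $(a_{k+j})$ are not larger. This contradicts both series summing to $r_k$.

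Second, $\log\lim(c^{(k)}_n)^{2^{-n}}=2^{-m-1}\mathcal{H}(T)\leq\frac12\mathcal{B}_{\leq}(p,q,0)=\log\beta$. Equality holds only if $c^{(k)}$ comes from a maximizing terminal decomposition. By (H1) and strict monotonicity of $\mathcal{H}$ (this is where your Aho--Sloane monotonicity belongs), such a sequence is eventually equal to $(b_n)$. Uniqueness of maximizing prefixes then forces $c^{(k)}=(b_n)$.

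So strictness comes from choosing $k$ such that the multiset $\{a_1,\ldots,a_k\}$ is not contained in the terms of $(b_n)$, not from $r_k>\rho_k$. Such a $k$ exists, since otherwise $(a_n)$ would be a sub-multiset of $(b_n)$ with the same reciprocal sum, hence equal to it.
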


This result was posed as \cite[Conjecture~4.1]{LiTang} by Li and one of the present authors, who also showed \cite[Theorem~1.6]{LiTang} that it would follow from the conjecture that all rational numbers have eventually greedy best Egyptian underapproximations, stated in the same paper as \cite[Conjecture~1.5]{LiTang}.
Since the latter ingredient is provided by our Theorem~\ref{thm:main}, Corollary~\ref{cor:li-tang} now becomes a fully established result.
A minor distinction is that the conditional result in \cite{LiTang} was formulated with a restriction to $\lambda\leq1$ and for strictly increasing $(a_n)_{n\geq1}$. These additional assumptions are not important for the proof. Anyway, they are addressed below in the more general Theorem~\ref{thm:asym}. 

Known $n$-term uniqueness theorems turn Corollary~\ref{cor:li-tang} into concrete results without the abstract uniqueness hypothesis.
Let $\lambda=p/q\in(0,1]$ be a reduced fraction and suppose that either
\begin{enumerate}[label=\textup{(\roman*)}]
\item $p\mid q+1$, or
\item $q$ is odd and $2$ is the least positive integer $\ell$ for which $p\mid q+\ell$.
\end{enumerate}
Let $(b_n)_{n\geq1}$ be the sequence obtained by applying the greedy underapproximation algorithm indefinitely to $p/q$. Then every sequence of integers $2\leq a_1<a_2<\cdots$ such that
\[
\sum_{n=1}^{\infty}\frac{1}{a_n}=\frac{p}{q}, \quad (a_n)_{n\geq1}\neq(b_n)_{n\geq1}
\]
satisfies
\[
\liminf_{n\to\infty}a_n^{2^{-n}}
<
\lim_{n\to\infty}b_n^{2^{-n}}.
\]
Namely, when $p=q=1$, the Curtiss--Takenouchi theorem gives the required uniqueness at every length \cite{Curtiss,Takenouchi}. When $p<q$, Nathanson proves it under the first condition \cite[Theorem~5]{Nathanson}, and Chu proves it under the second \cite[Theorem~1.12]{Chu}. In each case the unique maximizing $n$-tuple consists of the first $n$ terms of $(b_n)_{n\geq1}$, so Corollary~\ref{cor:li-tang} applies. For $\lambda=1$, this specializes to a question by Erd\H{o}s and Graham, proved independently by Li and one of the present authors \cite{LiTangSylvester} and by Kamio \cite{Kamio}; also see \cite[Problem~\#315]{Bloom}. Moreover, if $\lambda=1$, the number
\begin{equation}\label{eq:Vardi-com}
\lim_{n\to\infty}b_n^{2^{-n}} = 1.2640847353\ldots
\end{equation}
is known as the \emph{Vardi constant}; see \cite[Exercise~4.37, p.~518]{GKP} or \cite{Vardi}.

The possibility of the most general result along the lines of \cite[Problem~\#315]{Bloom} has been suggested to us by Wouter van Doorn. Namely, the uniqueness hypothesis in Corollary~\ref{cor:li-tang} is unnecessary if the conclusion is replaced by a dichotomy between two possible types of behavior of $(a_n)_{n\geq1}$.

\begin{theorem}\label{thm:asym}
Let $\lambda>0$ be a rational number and let $(b_n)_{n\geq1}$ be the sequence constructed before the formulation of Corollary~\ref{cor:li-tang}. Then every sequence of integers $(a_n)_{n\geq1}$ satisfying \eqref{eq:equalslambda} has one of the following two properties:
\[
\liminf_{n\to\infty}a_n^{2^{-n}}
<
\lim_{n\to\infty}b_n^{2^{-n}}
\]
or
\[
a_n=b_n\quad\text{for all sufficiently large }n.
\]
\end{theorem}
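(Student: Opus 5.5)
We compare partial sums with the optimal values. Write $\beta:=\lim_{n\to\infty}b_n^{2^{-n}}$. Let $(a_n)$ satisfy \eqref{eq:equalslambda}, and suppose $\liminf a_n^{2^{-n}}\geq\beta$. We must show $a_n=b_n$ for all large $n$.

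\emph{Tails are small.} Since $(a_n)$ is nondecreasing with $a_n\geq(\beta-\varepsilon)^{2^n}$ for large $n$, the tail $\sum_{k>n}1/a_k$ is at most roughly $1/a_{n+1}$ times a factor $1+o(1)$. Indeed, successive terms shrink at least like a square up to $\varepsilon$-losses, so the tail is dominated by its first term. Every partial sum $A_n=\sum_{i\leq n}1/a_i$ is an admissible $n$-term underapproximation of $\lambda$: the inequality is strict because infinitely many terms remain. Hence $A_n\leq R_n^{\leq}(\lambda)$ and $\lambda-A_n\geq\lambda-R_n^{\leq}(\lambda)$.

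\emph{The optimal remainder.} By the construction before Corollary~\ref{cor:li-tang}, which relies on Theorem~\ref{thm:main}, the tail $\lambda-R_n^{\leq}(\lambda)$ equals $1/T_{n-N}$ for $n\geq N$. Here $T_j$ follows \eqref{eq:unit-tail-recurrence}, so $T_j\approx\beta^{2^{N+j}}$ with a precise constant. We also know $b_{n+1}=T_{n-N}+1$, and the Aho--Sloane asymptotics give $T_j=c^{2^j}$ up to an additive bounded error.

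\emph{Comparison argument.} Define the deficits $d_n:=\lambda-A_n=\sum_{k>n}1/a_k$, so $d_n\geq 1/T_{n-N}$. The key integer fact is as follows. If $d_n>1/T_{n-N}$, the rational $A_n$ has denominator at most $\operatorname{lcm}(a_1,\dots,a_n)\leq\prod_{i\le n}a_i$, so $d_n-1/T_{n-N}\geq 1/(T_{n-N}\prod a_i\cdot q)$, where $q$ is the denominator of $\lambda$. This alone is too weak. Instead I would use the recursive optimality from Theorem~\ref{thm:main}: the remainder $1/T$ has a unique greedy best underapproximation. I would then show that if $d_n=1/T_{n-N}$ exactly for some $n\geq N$, then $a_{n+1},a_{n+2},\dots$ form an infinite expansion of $1/T_{n-N}$. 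Induction with the same argument then forces $a_{n+k}=b_{n+k}$. This works because the greedy step $1/(T+1)$ leaves remainder $1/(T(T+1))$ and any other choice $a_{n+1}\geq T+2$ loses too much. Concretely, $a_{n+1}>T+1$ would force the tail $\sum_{k>n+1}1/a_k\geq 1/T-1/(T+2)\approx 2/T^2$, which exceeds what $a_{n+2}\gtrsim a_{n+1}^{2-o(1)}$ permits. That comparison needs care but is of the same type as the tail-domination bound above.

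\emph{The main obstacle.} The crux is ruling out $d_n>1/T_{n-N}$ for all large $n$. Here I would exploit the growth: $d_n\approx 1/a_{n+1}$, and the liminf hypothesis gives $a_{n+1}\gtrsim\beta^{2^{n+1}}\approx T_{n-N}$. So $d_n$ is comparable to the optimal remainder within factors $(1+\varepsilon)^{2^n}$, which are not precise enough on their own. The plan is to consider the ratios $r_n:=d_nT_{n-N}\geq1$ and derive a recursion. Since $d_{n+1}=d_n-1/a_{n+1}$ and $a_{n+1}\geq G(d_n)$ by strictness, we get $d_{n+1}\leq d_n-1/G(d_n)$, though we need the reverse. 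The sharper tool is that $1/a_{n+1}<d_n$ and $d_{n+1}\approx d_n^2$ at best. Thus $\log(1/d_n)\leq 2^{n}\log\beta'$, and $r_n>1$ bounded away from $1$ would get squared each step, giving $\log(1/d_n)\leq 2^n(\log\beta-\delta)$. This translates into $a_{n+1}\lesssim 1/d_n$ along a subsequence, hence $\liminf a_n^{2^{-n}}<\beta$, a contradiction. Making the squaring heuristic rigorous, namely $d_{n+1}\geq$ roughly $d_n^2/(1+d_n)$ type lower bounds under the constraint $1/a_{n+1}<d_n$, together with the integrality gap at the first exact hit, is the step I expect to be hardest.
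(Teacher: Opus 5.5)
\paragraph{A genuine gap at the crux.} The step you flag as hardest is the entire content of the theorem, and the mechanism you propose for it is false. The ratio of $d_n$ to the optimal remainder $\lambda-R_n^{\leq}(\lambda)$ is not squared from one step to the next. The constraints $a_{n+1}\geq a_n$ and $1/a_{n+1}<d_n$ give no lower bound on $d_{n+1}$ beyond positivity: if $d_n$ barely exceeds $1/x$ with $x\geq a_n$, then $a_{n+1}=x$ makes $d_{n+1}$ far smaller than $d_n^2$. So the only lower bound you have is the global one, $d_{n+1}\geq\lambda-R_{n+1}^{\leq}(\lambda)$. For example, take $\lambda=10/61$. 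The prefix $(9)$ leaves $29/549$, about $2.5$ times the optimal one-term remainder $9/427$, yet $(9,19)$ leaves $2/10431$, which is exactly the optimal two-term remainder. Nothing in your estimates excludes $1<r_n<2$ for all large $n$. That is consistent with $a_{n+1}>1/d_n>T_{n-N}/2$, hence with $\liminf a_n^{2^{-n}}\geq\beta$.

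The exact-hit case has the same defect. A bound $a_{n+2}\gtrsim a_{n+1}^{2-o(1)}$, which a liminf hypothesis does not even provide, loses a factor $a_{n+1}^{o(1)}$. Excluding $a_{n+1}\geq T+2$, however, requires separating $d_{n+1}\geq 2/(T(T+2))$ from the optimal $1/(T(T+1))$, a discrepancy of only about a factor $2$. That case amounts to the theorem itself for $\lambda=1/T$ (for $\lambda=1$ it contains the Erd\H{o}s--Graham question on the Vardi constant). It does not follow from the uniqueness in Lemma~\ref{lem:unit-fraction-best} by a one-step comparison. Your tail bound $d_n\leq(1+o(1))/a_{n+1}$ is also unjustified, since a liminf lower bound permits $a_{n+2}=a_{n+1}$ whenever $a_{n+1}$ is unusually large.

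\paragraph{The missing idea.} You need an invariant of a prefix that detects whether it can still be optimal, and ratios for $\lambda$ alone cannot serve. For $\lambda=10/61$ the unique best tuples of lengths $1$ and $2$, namely $(7)$ and $(9,19)$, are incompatible. So $(b_n)$ itself has a suboptimal prefix of length $1$ or $2$, although $\sum_{i\leq n}1/b_i=R_n^{\leq}(\lambda)$ for all $n\geq N$.

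The paper instead works with the Bellman value of the truncated remainder $\mu_k=\lambda-\sum_{i\leq k}1/a_i$. It merges $a_1,\dots,a_k$ with the eventually greedy best expansion $(d_j^{(k)})_{j\geq1}$ of $\mu_k$ supplied by Theorem~\ref{thm:main}. A truncation comparison then bounds $\liminf a_n^{2^{-n}}$ by the growth constant of the merged sequence. Indeed, if $\liminf_j a_{k+j}^{2^{-j}}$ were larger, then eventually $a_{k+j}>d_j^{(k)}$. Since long prefixes of $(d_j^{(k)})$ are best underapproximations of $\mu_k$, the two series could not both sum to $\mu_k$.

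Next, take a terminal decomposition with $m$ denominators and remainder $1/T$, extended by its greedy tail into $(c_n)$. The identity $\log\lim_{n\to\infty}c_n^{2^{-n}}=2^{-m-1}\mathcal{H}(T)$ shows that this growth constant is at most $\beta$, with equality only for maximizing rays. The equality case needs two further facts:
\begin{enumerate}[label=\textup{(\roman*)}]
\item there are only finitely many maximizing rays, proved by an induction on the cleared numerator that reuses the cutoffs from Lemmas~\ref{lem:bellman-attainment} and~\ref{lem:global-bellman-attainment};
\item all maximizing rays eventually coincide with $(b_n)$.
\end{enumerate}
By pigeonhole, a single maximizing ray contains $a_1,\dots,a_k$ with multiplicity for infinitely many $k$, so it equals $(a_n)$. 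Without such a global, Bellman-type invariant, your stepwise comparison of remainders cannot close.
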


In particular, Theorem~\ref{thm:asym} identifies the maximal possible double exponential growth rate of denominators in a unit fraction series converging to a positive rational number $\lambda$.
Since our original intention in formulating Corollary~\ref{cor:li-tang} was to avoid repeating the arguments from \cite{LiTang}, we somewhat briefly describe the main ideas and necessary modifications for the proof of Theorem~\ref{thm:asym} in Section~\ref{sec:asymptotic-rigidity}.

%%%%%

\section{Scheme of the proof of Theorem~\ref{thm:main}}
\label{sec:scheme}

The proof uses ideas from the optimal control of dynamical systems and it is primarily inspired by the Bellman function method. Bellman functions originate in dynamic programming \cite{Bellman}, their use in sharp martingale inequalities goes back to Burkholder \cite{Burkholder}, and their role in harmonic analysis is surveyed in \cite{NazarovTreilVolberg,VasyuninVolberg}. To the best of our knowledge, this is the first application of optimal control techniques to an extremal problem for Egyptian fractions. Somewhat related is Eppstein's use of the shortest path dynamic programming to study Egyptian fractions \cite{Eppstein}, while dynamical systems have already been applied to expansions into infinite series of unit fractions \cite{Galambos,Robinson}.
The reader should rest assured that no prior knowledge of optimal control is needed for full comprehension of the proof.

%%%%%

\subsection{Dynamical reformulation}
In this section, for simplicity, we only talk about decompositions of rational numbers $\lambda\in(0,1]$ into sums of distinct unit fractions. The next section will be entirely general and establish Theorem~\ref{thm:main} in the form stated in the introduction.
The problem of computing $R_n^{<}(\lambda)$ for fixed $\lambda$ and $n$ is a static optimization problem. The following proof converts it into a dynamical optimization problem: we consider all possible partial decompositions of $\lambda=p/q$ as
\[
\frac{p}{q}
=\sum_{i=1}^{m}\frac{1}{x_i}+\frac{P}{Q},
\quad
2\leq x_1<\cdots<x_m,
\quad x_1,\ldots,x_m\in\mathbb{N},
\]
where $P/Q$ is a positive fraction, interpreted as a \emph{remainder}. Of course, the remainder can be decomposed further, splitting the original problem into subproblems in the following precise way.

Write the current remainder as $P/Q$, not necessarily in lowest terms, i.e., $P$ and $Q$ need not be coprime. Let $L$ be the largest denominator already selected, i.e., $L=x_m$ in the above representation. We regard the triple
\[ s=(P,Q,L) \]
as the \emph{state} of a controlled dynamical system. A denominator $x$ is an admissible \emph{control} when
\[
x>\max\Bigl\{\frac{Q}{P},L\Bigr\}.
\]
Since
\[
\frac{P}{Q}-\frac{1}{x}
=\frac{Px-Q}{Qx},
\]
it produces the \emph{successor} state
\begin{equation}\label{eq:successor-state}
s[x]:=(Px-Q,Qx,x),
\end{equation}
describing the new remainder and the newly added denominator.
Thus \emph{trajectories} starting at $(p,q,0)$, where $\lambda=p/q$, are precisely strictly increasing Egyptian underapproximations of $\lambda$. In particular, maximizing the underapproximation after $n$ controls is equivalent to minimizing the remainder after $n$ controls.
However, there is some freedom in ways of measuring the size of the remainder, and we will need to carefully choose an appropriate cost/payoff function below.

We next consider \emph{terminal decompositions}
\begin{equation}\label{eq:ter-dec}
\frac{P}{Q}
=\sum_{i=1}^{m}\frac{1}{x_i}+\frac{1}{T},
\quad
L<x_1<\cdots<x_m<T,
\end{equation}
with the evident interpretation when $m=0$.
After choosing the \emph{payoff function} $\mathcal{H}\colon[1,\infty)\to[0,\infty)$, we assign to each terminal decomposition \eqref{eq:ter-dec} the \emph{value} $2^{-m}\mathcal{H}(T)$.
Note that this value is obtained by ``discounting'' the payoff by the factor $2^{-m}$, which depends on the number of chosen denominators.
The \emph{Bellman function}
\[ \mathcal{B}(s) = \mathcal{B}(P,Q,L) \]
is then defined as the supremum of these values over all terminal decompositions for fixed values of $P$, $Q$, and $L$. Thus, our new dynamical optimization problem consists of estimating $\mathcal{B}(p,q,0)$, i.e., maximizing the ``terminal completion value'' of the initial state $(p,q,0)$.
Separating terminal decompositions according to whether the system stops immediately or takes a first control $x$ gives the dynamic programming identity
\[
\mathcal{B}(s)
=
\max\biggl\{
\underbrace{\mathcal{H}(T)}_{\substack{\text{if $P/Q=1/T$}\\ \text{and $L<T$}}},
\frac{1}{2}\sup_{x}\mathcal{B}(s[x])
\biggr\};
\]
see Lemma~\ref{lem:bellman-principle} for a bit more precise version.
Namely, the classical extremal theorem for unit fractions will supply the \emph{boundary condition} $\mathcal{B}(P,Q,L)=\mathcal{H}(T)$ whenever $P/Q$ is already a unit fraction $1/T$ and $T>L$.

A simpler dynamics appears once the remainder is a unit fraction. Indeed, the strict greedy choice from $1/T$ is $1/(T+1)$, and the new remainder is
\[
\frac{1}{T}-\frac{1}{T+1}
=\frac{1}{T(T+1)}.
\]
Consequently, the reciprocal of the remainder evolves under the map
\begin{equation}\label{eq:def-of-Phi}
\Phi(t)=t(t+1)\quad\text{for } t\geq1.
\end{equation}
Write
\[
\iterPhi{j} := \underbrace{\Phi \circ \Phi \circ \cdots \circ \Phi}_{j\text{ copies of }\Phi},
\]
with the convention that $\iterPhi{0}$ is the identity.

Lemma~\ref{lem:payoff-construction} below will then construct the payoff function $\mathcal{H}$ as a unique strictly increasing function such that
\begin{equation}\label{eq:H-log-bounds}
\mathcal{H}(\Phi(t))=2\mathcal{H}(t),
\quad
\log t\leq\mathcal{H}(t)\leq\log t+\log 2.
\end{equation}
Hence, the value $2^{-m}\mathcal{H}(T)$ of \eqref{eq:ter-dec} remains the same when a greedy term is appended to the unit fraction remainder. Lemma~\ref{lem:greedy-bellman-drift} shows, moreover, that this value strictly increases along trajectories consisting of states appearing in more general greedy trajectories leading to a unit fraction.

A substantive arithmetic ingredient is the proof that $\mathcal{B}(P,Q,L)$ is finite and attained. After applying controls $x_1,\ldots,x_j$, define common denominators $Q_j$ and ``cleared'' numerators $C_j$ by
\begin{equation}
Q_j:=Q\prod_{i=1}^j x_i,
\quad
C_j:=Q_j\left(
\frac{P}{Q}-\sum_{i=1}^j\frac1{x_i}
\right),
\quad C_0:=P.
\label{eq:cleared-numerators}
\end{equation}
(Once again, $C_j/Q_j$ need not be in lowest terms.)
A single control need not decrease $C_j$. We therefore stop at the first index $r$ for which
\[
C_j\geq P\quad\text{for } j<r,
\quad
C_r=d<P.
\]
The comparison argument in Lemma~\ref{lem:bellman-transition} shows that a branch with no such descent cannot improve upon the ``reference'' payoff $\mathcal{H}(Q/P)$. At the first descent, it gives
\[
t_r<\frac{P}{d}\,\Phi^{\circ r}(t),
\quad
x_r<t_r,
\quad
t=\frac{Q}{P},
\]
where $t_r$ is the reciprocal of the new remainder. The second inequality makes the successor state admissible, while $d<P$ allows strong induction on the cleared numerator. This induction will show that $\mathcal{B}(P,Q,L)$ is finite and attained; see Lemmas~\ref{lem:bellman-majorant} and~\ref{lem:bellman-attainment}.

For a rational number $\lambda=p/q$ that is not a unit fraction, the aforementioned attainment of $\mathcal{B}(p,q,0)$ guarantees an optimal terminal decomposition
\[
\lambda
=\sum_{i=1}^{N}\frac{1}{a_i}+\frac{1}{T},
\quad
a_1<\cdots<a_N<T,
\]
such that
\[
\Lambda
:=2^{-N}\mathcal{H}(T)
=\mathcal{B}(p,q,0).
\]
Put $T_0=T$ and define recursively $T_{j+1}=T_j(T_j+1)$ as in the introduction. Appending the greedy denominators $T_0+1,\ldots,T_{k-1}+1$ gives an $(N+k)$-term underapproximation with error $1/T_k$. Using the above properties of the simpler ``unit fraction dynamics'' shows that the values of the extended terminal decompositions stay the same,
\[
2^{-(N+k)}\mathcal{H}(T_k)=\Lambda.
\]
Notice that this proves Bellman optimality of the resulting terminal decompositions, but does not by itself prove their optimality among all underapproximations with a fixed number of terms.

That final implication is supplied by Lemma~\ref{lem:completion}. Given a sufficiently good $n$-term underapproximation with error $h$, greedily complete $h$ to a unit fraction. Whenever a new greedy denominator collides with an original denominator, delete the colliding original term and restart.
After $r\leq n$ deletions, this produces a modified error $h'$ satisfying
\[
h'\leq2^r h
\]
and a terminal decomposition for which
\[
\mathcal{B}(p,q,0)
\geq
2^{-(n-r)}\mathcal{H}(1/h').
\]

Suppose now that $n=N+k$ and that an $n$-term underapproximation is better than the above greedy extension. Its error satisfies $h<1/T_k$, and the completion lemma (i.e., Lemma~\ref{lem:completion}) and strict monotonicity of $\mathcal{H}$ give
\[
\Lambda
>
2^{-n}2^r
\mathcal{H}\Bigl(\frac{T_k}{2^r}\Bigr).
\]
For all sufficiently large $k$, the logarithmic bounds for $\mathcal{H}$ from \eqref{eq:H-log-bounds} and the double exponential growth of $T_k$ imply, uniformly in $0\leq r\leq n$, that
\[
2^r\mathcal{H}\Bigl(\frac{T_k}{2^r}\Bigr)
\geq\mathcal{H}(T_k).
\]
The preceding displayed strict inequality therefore contradicts $\Lambda=2^{-n}\mathcal{H}(T_k)$.
Thus, all sufficiently long greedy extensions of the optimal terminal decomposition are best underapproximations.
This will prove \eqref{eq:main-decomposition}, the asserted uniqueness of the maximizing denominator tuple for the residual problem, and the existence of the compatible nested maximizing sequence of denominator tuples.

%%%%%

\subsection{Construction of the payoff function}
Let $\Phi$ be defined as in \eqref{eq:def-of-Phi}. 
As we have seen in the previous subsection, appending the next greedy term to a terminal decomposition with remainder $1/T$ changes the remainder to $1/\Phi(T)$. This motivates us to introduce the following payoff function.

\begin{lemma}
\label{lem:payoff-construction}
There exists a unique function
\[
\mathcal{H}\colon[1,\infty)\longrightarrow[0,\infty)
\]
with the following properties:
\begin{enumerate}[
 label=\textup{(H\arabic*)},
 ref=\textup{(H\arabic*)}
]
\item\label{item:payoff-H1}
$\mathcal{H}(t(t+1))=2\mathcal{H}(t)$ for $t\geq1$;
\item\label{item:payoff-H2}
$\mathcal{H}$ is strictly increasing on $[1,\infty)$;
\item\label{item:payoff-H3}
$\log t\leq \mathcal{H}(t)\leq\log t+\log 2$ for $t\geq1$.
\end{enumerate}
\end{lemma}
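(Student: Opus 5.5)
Our plan is to construct $\mathcal{H}$ as a Böttcher-type limit
\[
\mathcal{H}(t):=\lim_{j\to\infty}2^{-j}\log\iterPhi{j}(t),\qquad t\geq1,
\]
and then to prove uniqueness from the two-sided bound \ref{item:payoff-H3}. Write $g_j(t):=2^{-j}\log\iterPhi{j}(t)$. Since $\log\Phi(t)=2\log t+\log(1+1/t)$, we obtain the telescoping identity
\[
g_{j+1}(t)-g_j(t)=2^{-j-1}\log\Bigl(1+\frac{1}{\iterPhi{j}(t)}\Bigr)\in\bigl(0,\,2^{-j-1}\log 2\bigr].
\]
This holds because $\iterPhi{j}(t)\geq1$. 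The sequence $(g_j(t))_j$ is therefore increasing, and the series of increments converges uniformly in $t$, so the limit exists. Summing the increments gives
\[
\log t=g_0(t)\leq\mathcal{H}(t)\leq\log t+\sum_{j\geq0}2^{-j-1}\log2=\log t+\log2,
\]
which is \ref{item:payoff-H3}. Property \ref{item:payoff-H1} is immediate from the identity $g_j(\Phi(t))=2g_{j+1}(t)$, after letting $j\to\infty$. The values lie in $[0,\infty)$ because $\log t\geq0$ for $t\geq1$.

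For \ref{item:payoff-H2}, we note that each $g_j$ is strictly increasing, since $\Phi$ is strictly increasing on $[1,\infty)$. A pointwise limit of such functions is only guaranteed to be nondecreasing, so strictness needs a separate argument. We would write
\[
\mathcal{H}(t)=\log t+\sum_{j\geq0}2^{-j-1}\log\Bigl(1+\frac{1}{\iterPhi{j}(t)}\Bigr).
\]
Here the correction terms are decreasing in $t$, so monotonicity is not obvious from this form. A cleaner route is to take $s<t$ and use, for every $j$,
\[
\mathcal{H}(t)-\mathcal{H}(s)=\lim_{k\to\infty}\bigl(g_k(t)-g_k(s)\bigr)\geq g_j(t)-g_j(s)-\sum_{k\geq j}\bigl[(g_{k+1}-g_k)(s)-(g_{k+1}-g_k)(t)\bigr]^{+}.
\]
An even simpler route follows the monotone chain
\[
\log\iterPhi{k}(t)-\log\iterPhi{k}(s)\geq 2\bigl(\log\iterPhi{k-1}(t)-\log\iterPhi{k-1}(s)\bigr).
\]
This inequality holds because $\log\Phi(u)=\log u+\log(u+1)$ and $\log(u+1)-\log(v+1)\geq0$; in fact $\log(u+1)$ increases at least as much as $\log u$ fails to, so we need to check that $\log(u+1)-\log(v+1)\geq\log u-\log v$ is false in general. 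We therefore only get
\[
\log\Phi(u)-\log\Phi(v)\geq\log u-\log v
\]
for $u>v\geq1$. The accurate version is $\log(u+1)-\log(v+1)=\log\frac{u+1}{v+1}$, which for $u=\rho v$ is at least $\frac{1}{2}\log\rho$, since $\frac{\rho v+1}{v+1}\geq\frac{\rho+1}{2}\geq\sqrt\rho$ when $v\geq1$. Hence
\[
\log\iterPhi{k}(t)-\log\iterPhi{k}(s)\geq\bigl(\tfrac32\bigr)^k\log\frac{t}{s}.
\]
This gives only $2^{-k}(3/2)^k\to0$, so the chain does not directly yield strictness either. We would therefore use the functional equation instead. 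Suppose $\mathcal{H}(s)=\mathcal{H}(t)$ for some $s<t$. Then $\mathcal{H}$ is constant on $[s,t]$, and by \ref{item:payoff-H1} it is constant on $[\iterPhi{k}(s),\iterPhi{k}(t)]$ for every $k$. Since $\log\iterPhi{k}(t)-\log\iterPhi{k}(s)\geq(3/2)^k\log(t/s)\to\infty$, these intervals eventually have logarithmic length exceeding $\log 2$. This contradicts \ref{item:payoff-H3}, because $\mathcal{H}$ must grow by more than $0$ across an interval $[a,b]$ with $\log b-\log a>\log2$. Indeed, $\mathcal{H}(b)\geq\log b>\log a+\log2\geq\mathcal{H}(a)$.

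For uniqueness, we take two functions $\mathcal{H}_1,\mathcal{H}_2$ satisfying \ref{item:payoff-H1} and \ref{item:payoff-H3}. Iterating \ref{item:payoff-H1} gives
\[
\mathcal{H}_i(t)=2^{-j}\mathcal{H}_i\bigl(\iterPhi{j}(t)\bigr),
\]
and then \ref{item:payoff-H3} yields
\[
|\mathcal{H}_1(t)-\mathcal{H}_2(t)|\leq2^{-j}\log2\to0.
\]
This argument uses only \ref{item:payoff-H1} and \ref{item:payoff-H3}, and shows that any such function coincides with the limit above. The main obstacle we expect is strict monotonicity, which is handled by the interval-blowup argument.
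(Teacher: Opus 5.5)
Your proof is correct and follows essentially the paper's route: the same limit $\lim_j 2^{-j}\log\iterPhi{j}(t)$ with telescoping increments bounded by $2^{-j-1}\log 2$ for (H1) and (H3), the same $2^{-j}\log 2$ uniqueness argument, and strictness from the blowup of the ratio of iterates combined with (H1) and (H3), where your quantitative bound $(3/2)^k\log(t/s)$ replaces the paper's qualitative unboundedness of $\rho_j$. In a write-up, delete the retracted factor-$2$ chain and the unused $[\cdot]^{+}$ estimate, and replace the contradiction by the direct estimate $\mathcal{H}(t)-\mathcal{H}(s)\geq 2^{-k}\bigl(\log\frac{\iterPhi{k}(t)}{\iterPhi{k}(s)}-\log 2\bigr)>0$ for large $k$, which does not need the separate nondecreasing step.
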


\begin{figure}
\begin{center}
\begin{tikzpicture}
\begin{axis}[
  width=.68\textwidth,
  height=.32\textwidth,
  xmin=1, xmax=8,
  ymin=0, ymax=2.85,
  axis lines=left,
  xtick={1,2,3,4,5,6,7,8},
  ytick={0,1,2},
  tick label style={font=\small},
  label style={font=\small},
  legend style={
    at={(.97,.04)},
    anchor=south east,
    draw=none,
    fill=none,
    font=\small,
    row sep=-1pt
  },
  legend cell align=left,
  clip=false
]
\addplot[forget plot, name path=lowerpath, draw=none,
  domain=1:8, samples=120] {ln(x)};
\addplot[forget plot, name path=upperpath, draw=none,
  domain=1:8, samples=120] {ln(2*x)};
\addplot[forget plot, black!10]
  fill between[of=lowerpath and upperpath];

\addplot[black!95, densely dashed, domain=1:8, samples=120]
  {ln(2*x)};
\addlegendentry{$t\mapsto\log t+\log 2$}

\addplot[very thick, black, smooth] coordinates {
  (1.00,0.46869666)
  (1.25,0.60473251)
  (1.50,0.72698926)
  (1.75,0.83727461)
  (2.00,0.93739332)
  (2.25,1.02889274)
  (2.50,1.11304541)
  (2.75,1.19088774)
  (3.00,1.26326528)
  (3.25,1.33087230)
  (3.50,1.39428348)
  (3.75,1.45397853)
  (4.00,1.51036130)
  (4.25,1.56377450)
  (4.50,1.61451117)
  (4.75,1.66282362)
  (5.00,1.70893056)
  (5.25,1.75302269)
  (5.50,1.79526727)
  (5.75,1.83581172)
  (6.00,1.87478665)
  (6.25,1.91230827)
  (6.50,1.94848046)
  (6.75,1.98339636)
  (7.00,2.01713990)
  (7.25,2.04978687)
  (7.50,2.08140600)
  (7.75,2.11205978)
  (8.00,2.14180517)
};
\addlegendentry{$t\mapsto\mathcal{H}(t)$}

\addplot[thick, black!95, densely dotted, domain=1:8, samples=120]
  {ln(x)};
\addlegendentry{$t\mapsto\log t$}
\end{axis}
\end{tikzpicture}
\end{center}
\caption{Graph of $\mathcal{H}$.}
\label{fig:graph-of-H}
\end{figure}

\begin{proof}
For existence, fix $t\geq1$, put $t_0=t$, and define recursively $t_{j+1}=\Phi(t_j)$ for $j\geq0$. In other words, $t_j:=\iterPhi{j}(t)$. Iterating
\[
\log t_{j+1}
=2\log t_j+\log\Bigl(1+\frac1{t_j}\Bigr)
\]
gives
\[
2^{-j}\log t_j
=\log t+
\sum_{\ell=0}^{j-1}2^{-\ell-1}
\log\Bigl(1+\frac1{t_\ell}\Bigr).
\]
The series converges uniformly on $[1,\infty)$ because its $\ell$th summand is at most $2^{-\ell-1}\log 2$. Consequently, the limit
\begin{equation}\label{eq:Hformula}
\mathcal{H}(t):=\lim_{j\to\infty}2^{-j}
\log\bigl(\iterPhi{j}(t)\bigr)
\end{equation}
exists and is given by
\[
\mathcal{H}(t)=\log t+
\sum_{\ell=0}^{\infty}2^{-\ell-1}
\log\Bigl(1+\frac1{t_\ell}\Bigr).
\]
This proves existence and \ref{item:payoff-H3}. The definition immediately yields
\[
\mathcal{H}(\Phi(t))=2\mathcal{H}(t),
\]
which verifies \ref{item:payoff-H1}.

It remains to prove \ref{item:payoff-H2}. Fix $1\leq s<t$ and write
\[
s_j:=\iterPhi{j}(s),
\quad
t_j:=\iterPhi{j}(t),
\quad
\rho_j:=\frac{t_j}{s_j}.
\]
Then, since $\Phi$ is obviously strictly increasing, $\rho_j>1$ and
\[
\frac{\rho_{j+1}}{\rho_j}
=\frac{t_j+1}{s_j+1}>1.
\]
Thus, $(\rho_j)_{j\geq0}$ is increasing. It is unbounded, for if it converged to a finite limit $\rho>1$, then $s_j\to\infty$ and
\[
\frac{\rho_{j+1}}{\rho_j}
=\frac{\rho_js_j+1}{s_j+1}
\longrightarrow\rho>1,
\]
whereas convergence of $(\rho_j)_{j\geq0}$ would force the same ratio to tend to $1$. Choose $j$ with $t_j>2s_j$. By \ref{item:payoff-H1} and \ref{item:payoff-H3},
\[
\mathcal{H}(t)-\mathcal{H}(s)
=2^{-j}\bigl(\mathcal{H}(t_j)-\mathcal{H}(s_j)\bigr)
\geq2^{-j}\log\frac{t_j}{2s_j}>0.
\]
Hence $\mathcal{H}$ is strictly increasing.

Finally, suppose that $\widetilde{\mathcal{H}}$ is another function satisfying \ref{item:payoff-H1}--\ref{item:payoff-H3}. Iterating \ref{item:payoff-H1} and then applying \ref{item:payoff-H3}, we obtain, for every $j\geq0$,
\[
0\leq
\widetilde{\mathcal{H}}(t)
-2^{-j}\log\bigl(\iterPhi{j}(t)\bigr)
=2^{-j}\left(
\widetilde{\mathcal{H}}\bigl(\iterPhi{j}(t)\bigr)
-\log\bigl(\iterPhi{j}(t)\bigr)
\right)
\leq2^{-j}\log 2.
\]
Letting $j\to\infty$ and using formula \eqref{eq:Hformula} gives $\widetilde{\mathcal{H}}(t)=\mathcal{H}(t)$ for every $t\geq1$. This proves uniqueness.
\end{proof}

Note that the above proof actually provides a formula for $\mathcal{H}(t)$; it is given by \eqref{eq:Hformula}. The following interpretation of this formula for integer values of $t$ has been communicated to us by Xiao Hu.

\begin{remark}\label{rem:Hinterpretation}
The payoff function has a particularly transparent arithmetic interpretation at positive integer arguments. Fix $t\in\mathbb{N}$, put $T_0:=t$, and define $(T_j)_{j\geq0}$ recursively by \eqref{eq:unit-tail-recurrence}.
By Lemma~\ref{lem:unit-fraction-best} below, the unique best $n$-term underapproximation of $1/t$ is obtained from the beginning of this Sylvester-type iteration, i.e.,
\[
R_n^{\leq}\Bigl(\frac1t\Bigr)
=\sum_{j=0}^{n-1}\frac1{T_j+1}
=\frac1t-\frac1{T_n}.
\]
Consequently,
\[
\mathcal{H}(t)
=\lim_{n\to\infty}2^{-n}\log T_n
=\log\left(\lim_{n\to\infty}T_n^{2^{-n}}\right).
\]
Thus, at integer arguments, $\mathcal{H}(t)$ is precisely the logarithm of the asymptotic growth rate of the reciprocal errors along the optimal Sylvester-type trajectory.
By setting $b_n:=T_{n-1}+1$ to be the actual denominators and in the notation of Corollary~\ref{cor:li-tang},
\[
\mathcal{H}(t)
=2\log\left(\lim_{n\to\infty}b_n^{2^{-n}}\right).
\]
In particular, $e^{\mathcal{H}(1)/2}$ is equal to the Vardi constant \eqref{eq:Vardi-com}.

For a non-integer rational number $t\geq1$, the limit formula \eqref{eq:Hformula} remains valid, but it no longer has this direct extremal interpretation. The orbit $t,\Phi(t),\iterPhi{2}(t),\ldots$ does not describe the reciprocal errors of best underapproximations of $1/t$. Instead, Theorem~\ref{thm:main} first selects a finite maximizing prefix whose remainder is a unit fraction. Only after that prefix does the forced Sylvester-type tail begin.
\end{remark}

%%%%%

\section{Complete proof of Theorem~\ref{thm:main}}

Throughout this section, $\mathcal{H}$ always denotes the payoff function from Lemma~\ref{lem:payoff-construction}.

\subsection{Two control problems}

We first specify the control problems in more detail than before.
The state space is
\[
\mathfrak{S}=\mathbb{N}^2\times(\mathbb{N}\cup\{0\}).
\]
The state $s=(P,Q,L)$ records the current remainder $P/Q$ and the largest denominator $L$ previously selected. We call $s$ \emph{admissible}
if
\begin{equation}
LP<Q.
\label{eq:admissible-state}
\end{equation}
We define the control problem on all of $\mathfrak{S}$, not only on its admissible part, because applying a single control need not preserve \eqref{eq:admissible-state}. When $P\leq Q$, Lemma~\ref{lem:bellman-transition} below returns a trajectory to the admissible region when its cleared numerator first falls below $P$. Initial remainders larger than $1$ will be treated separately.

At $s=(P,Q,L)$ the two sets of admissible controls are
\begin{align}
\mathfrak{A}_{\leq}(s)
&=\left\{x\in\mathbb{N}:x>\frac QP,\ x\geq 2,\ x\geq L\right\},\nonumber\\
\mathfrak{A}_{<}(s)
&=\left\{x\in\mathbb{N}:x>\frac QP,\ x\geq 2,\ x>L\right\}.
\label{eq:control-set}
\end{align}
Choosing $x$ from $\mathfrak{A}_{\leq}(s)$ or $\mathfrak{A}_{<}(s)$ subtracts $1/x$ from the current remainder and produces the successor state $s[x]$ defined by \eqref{eq:successor-state}.
The condition $x>Q/P$ makes the new numerator positive, so $s[x]\in\mathfrak{S}$.

When $\sigma$ stands for either $\leq$ or $<$, a terminal decomposition from $s=(P,Q,L)$ is, as before, a choice of an integer $m\geq0$ and integers $x_1,\ldots,x_m\geq2$, $T\geq2$ such that
\begin{equation}
\frac PQ=\sum_{i=1}^m\frac1{x_i}+\frac{1}{T},
\label{eq:terminal-decomposition-identity}
\end{equation}
and
\begin{equation}
\begin{cases}
L\leq x_1\leq\cdots\leq x_m<T, & \quad\text{for nondecreasing denominators } (\sigma=\leq), \\
L<x_1<\cdots<x_m<T, & \quad\text{for strictly increasing denominators } (\sigma=<).
\end{cases}
\label{eq:terminal-decomposition-order}
\end{equation}
When $m=0$, either of the conditions means simply $L<T$. The decomposition stops at the unit fraction remainder $1/T$ and has value
\[
2^{-m}\mathcal{H}(T).
\]
Note that each control contributes the discount factor $1/2$ to this value.
The final term $1/T$ in \eqref{eq:terminal-decomposition-identity} is a remainder, not another selected control. When $m\geq1$, the strict condition $x_m<T$ ensures that the terminal state is admissible and can also be continued by its greedy tail of unit fractions. Every terminal decomposition \eqref{eq:terminal-decomposition-identity} determines a controlled trajectory:
\[ s=(P,Q,L), \quad s[x_1], \quad s[x_1][x_2], \quad \ldots, \quad s[x_1]\cdots[x_m]. \]
Before $x_j$ is selected, the still unselected terms in \eqref{eq:terminal-decomposition-identity} show that the current remainder is strictly larger than $1/x_j$; hence $x_j$ belongs to the relevant control set \eqref{eq:control-set}, while \eqref{eq:terminal-decomposition-order} supplies the cutoff condition.
The two Bellman functions (for nondecreasing and strictly increasing denominators) are therefore defined by
\[
\mathcal{B}_\sigma(P,Q,L):=\sup 2^{-m}\mathcal{H}(T),
\]
where the supremum ranges over all terminal decompositions from $(P,Q,L)$ under the denominator convention specified by $\sigma$.
Here we use the extended real numbers and interpret $\sup\emptyset:=-\infty$. The enlarged domain makes the Bellman recursion below closed under every legal control.

Define the \emph{stopping obstacle} by
\[
\mathcal{O}(P,Q,L)=
\begin{cases}
\mathcal{H}(Q/P), & \text{if } Q/P\in\mathbb{N}\text{ and } Q/P>L,\\
-\infty,&\text{otherwise}.
\end{cases}
\]
The following lemma records the so-called \emph{Bellman optimality principle}.

\begin{lemma}\label{lem:bellman-principle}
If $\sigma$ is either $\leq$ or $<$, then the function $\mathcal{B}_\sigma$ has the following properties.
\begin{enumerate}[
 label=\textup{(B\arabic*)},
 ref=\textup{(B\arabic*)}
]
\item\label{item:bellman-B1}
For every $(P,Q,L)\in\mathfrak{S}$ and $c\in\mathbb{N}$,
\[
\mathcal{B}_\sigma(cP,cQ,L)=\mathcal{B}_\sigma(P,Q,L).
\]

\item\label{item:bellman-B2}
The identity
\[
\mathcal{B}_\sigma(s)
=\max\left\{
\mathcal{O}(s),
\frac12\sup_{x\in\mathfrak{A}_\sigma(s)}
\mathcal{B}_\sigma(s[x])
\right\}
\]
holds for every $s\in\mathfrak{S}$.

\item\label{item:bellman-B3}
If $\mathcal{F}\colon\mathfrak{S}\to[-\infty,\infty]$ satisfies
\begin{equation}
\mathcal{F}(s)\geq\mathcal{O}(s)
\quad\text{and}\quad
\mathcal{F}(s)\geq\frac12\mathcal{F}(s[x])
\label{eq:bellman-supersolution}
\end{equation}
for every state $s\in\mathfrak{S}$ and every control
$x\in\mathfrak{A}_\sigma(s)$, then
\[
\mathcal{F}(s)\geq\mathcal{B}_\sigma(s)
\]
holds for every $s\in\mathfrak{S}$.
\end{enumerate}
\end{lemma}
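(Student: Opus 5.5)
All three properties follow directly from the definition of $\mathcal{B}_\sigma$ as a supremum over terminal decompositions. No arithmetic input and no property of $\mathcal{H}$ are needed beyond $\mathcal{H}\geq0$ being finite-valued.

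\ref{item:bellman-B1}: the set of terminal decompositions from $(P,Q,L)$ depends on $P$ and $Q$ only through the rational number $P/Q$. Both \eqref{eq:terminal-decomposition-identity} and \eqref{eq:terminal-decomposition-order} involve only $P/Q$ and $L$. So the two suprema range over literally the same set of values $2^{-m}\mathcal{H}(T)$. The same observation shows that the control sets \eqref{eq:control-set} are invariant under $(P,Q)\mapsto(cP,cQ)$. Also, $s[x]$ and $(cP,cQ,L)[x]$ differ only by the common factor $c$, which will be convenient in \ref{item:bellman-B2}.

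\ref{item:bellman-B2}: I would split the terminal decompositions from $s$ according to whether $m=0$ or $m\geq1$.

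\begin{itemize}
\item The case $m=0$ exists exactly when $P/Q=1/T$ with $T\geq2$ an integer and $T>L$. Its value is $\mathcal{H}(Q/P)=\mathcal{O}(s)$; otherwise $\mathcal{O}(s)=-\infty$. One small check is needed here: if $Q/P=1$ then $T=1$ is not allowed. In that case $\mathcal{O}$ as defined would give $\mathcal{H}(1)$ when $L=0$, so I would either verify that the intended reading requires $T\geq2$ or note that this edge case is handled by the convention. This is the only spot I expect to need care.
\item For $m\geq1$, set $x=x_1$. As explained before the lemma, $x\in\mathfrak{A}_\sigma(s)$: the remainder exceeds $1/x_1$, so $x>Q/P$, and the ordering gives the cutoff with respect to $L$. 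The tail $(x_2,\ldots,x_m;T)$ is a terminal decomposition from $s[x]=(Px-Q,Qx,x)$, because its remainder is $P/Q-1/x$, its largest selected denominator is $x$, and its ordering condition relative to $L=x$ is exactly the remaining part of \eqref{eq:terminal-decomposition-order}. Its value is $2^{-(m-1)}\mathcal{H}(T)$, so the full value is one half of the tail's value.
\item Conversely, given $x\in\mathfrak{A}_\sigma(s)$ and any terminal decomposition from $s[x]$, prepending $x$ gives a terminal decomposition from $s$. The cutoff $x\geq L$ or $x>L$ matches the control set. The ordering with the tail is inherited, since the tail's constraint is stated relative to $L=x$. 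The extra condition $x_m<T$ when the tail is empty is $x<T$, which is exactly the tail's $m=0$ condition $L<T$.
\end{itemize}

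Hence the set of values splits as $\{\mathcal{O}(s)\}$ (if finite) together with $\bigcup_x\frac12\{\text{values from }s[x]\}$. Taking suprema, with the $\sup\emptyset=-\infty$ conventions, gives the identity.

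\ref{item:bellman-B3}: it suffices to show $\mathcal{F}(s)\geq2^{-m}\mathcal{H}(T)$ for each terminal decomposition, by induction on $m$.

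\begin{itemize}
\item For $m=0$ the value equals $\mathcal{O}(s)\leq\mathcal{F}(s)$.
\item For $m\geq1$, with $x=x_1\in\mathfrak{A}_\sigma(s)$ as above, the induction hypothesis applied to the tail at $s[x]$ gives $\mathcal{F}(s[x])\geq2^{-(m-1)}\mathcal{H}(T)$. Then \eqref{eq:bellman-supersolution} yields $\mathcal{F}(s)\geq\frac12\mathcal{F}(s[x])\geq2^{-m}\mathcal{H}(T)$.
\end{itemize}

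Extended-real arithmetic causes no trouble, since $\frac12(\pm\infty)=\pm\infty$ and the comparisons are monotone. Taking the supremum over all terminal decompositions finishes the proof.
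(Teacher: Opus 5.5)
Your argument is the paper's own. You get \ref{item:bellman-B1} from the fact that everything depends only on $P/Q$, and \ref{item:bellman-B2} by splitting terminal decompositions at the first denominator. Your induction on $m$ for \ref{item:bellman-B3} is the paper's chain of inequalities along a fixed trajectory, written recursively.

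The one loose end is the edge case you flagged. It is genuine, and the paper's proof passes over it without comment. At $s=(c,c,0)$ we have $\mathcal{O}(s)=\mathcal{H}(1)$, since $Q/P=1\in\mathbb{N}$ and $1>L=0$. Yet no stopped decomposition exists, because $T\geq2$ is required. So your claim that the values from $s$ consist of $\mathcal{O}(s)$ together with half the values from the successors is false there, and no convention repairs it.

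What you actually need at such states is
\[
\frac12\sup_{x}\mathcal{B}_\sigma(s[x])=\mathcal{B}_\sigma(s)\geq\mathcal{H}(1).
\]
The equality is your own splitting, since every decomposition from $s$ has $m\geq1$. The inequality comes from the terminal decomposition $1=\frac12+\frac13+\frac16$. It is legal in both conventions, since $0<2<3<6$. Its value is $2^{-2}\mathcal{H}(6)=\mathcal{H}(1)$, by applying \ref{item:payoff-H1} at $t=2$ and then at $t=1$.

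The states $(c,c,0)$ are the only ones where $\mathcal{O}$ is finite but no stopped decomposition exists. Part \ref{item:bellman-B3} is unaffected, because there the extra obstacle value only strengthens the hypothesis $\mathcal{F}\geq\mathcal{O}$.
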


\begin{proof}
The terminal decompositions and their values depend on $P$ and $Q$ only through the fraction $P/Q$, which proves \ref{item:bellman-B1}. A terminal decomposition is either trivial (and stops the process immediately) or has a first denominator $x$, after which its remaining terms form a terminal decomposition from $s[x]$. Splitting the supremum according to these two alternatives proves \ref{item:bellman-B2}.

To prove \ref{item:bellman-B3}, follow any terminal decomposition \eqref{eq:terminal-decomposition-identity} and define recursively
\[
s_0:=s,
\quad
s_j:=s_{j-1}[x_j]\quad\text{for } 1\leq j\leq m.
\]
Iterating the second inequality in \eqref{eq:bellman-supersolution} exactly $m$ times, and then using the obstacle inequality from \eqref{eq:bellman-supersolution} at the final state $s_m$, gives
\begin{align}
\mathcal{F}(s) & = \mathcal{F}(s_0) 
\geq 2^{-1}\mathcal{F}(s_1) \geq 2^{-2}\mathcal{F}(s_2) \geq\cdots \nonumber  \\
& \geq 2^{-m}\mathcal{F}(s_m) \geq 2^{-m}\mathcal{O}(s_m) = 2^{-m}\mathcal{H}(T). \label{eq:explicit-bellman-induction}
\end{align}
Taking the supremum over all terminal decompositions proves $\mathcal{F}(s)\geq\mathcal{B}_\sigma(s)$. Thus $\mathcal{B}_\sigma$ is precisely the least supersolution of \eqref{eq:bellman-supersolution}.
\end{proof}

Although the quotient $P/Q$ determines the remainder, we retain the unreduced integers $P$ and $Q$ because the cleared numerator $P$ will be the parameter of an induction later in the proof. The iteration \eqref{eq:explicit-bellman-induction} follows one fixed decomposition for finitely many steps. The argument below instead obtains uniform information about all decompositions by strong induction on $P$. 
%It first proves that the Bellman functions take finite values and their defining suprema are attained on admissible states with $P\leq Q$.

%%%%%

\subsection{States from unit and nonunit fractions}

We begin by recording the classical unit fraction extremality result in the form needed below. Besides settling the unit fraction case of the theorem, it will provide the boundary condition for the two Bellman functions and the induction basis for the attainment argument in the next subsection.

\begin{lemma}
\label{lem:unit-fraction-best}
Let $T$ be a positive integer, put $T_0:=T$, and define $(T_j)_{j\geq0}$ by the recurrence \eqref{eq:unit-tail-recurrence}.
Take $\sigma$ to be either $\leq$ or $<$. Then for every $m\geq1$, the greedy $m$-term denominator tuple for $1/T$ is the unique maximizing tuple, and
\begin{equation}
R_m^{\sigma}\Bigl(\frac{1}{T}\Bigr)
=\sum_{j=0}^{m-1}\frac1{T_j+1}
=\frac1T-\frac1{T_m}.
\label{eq:unit-fraction-best-formula}
\end{equation}
Consequently, the Bellman boundary condition
\begin{equation}
\mathcal{B}_\sigma(P,Q,L)=\mathcal{H}(Q/P)
\label{eq:bellman-boundary-condition}
\end{equation}
holds for every admissible state $(P,Q,L)$ for which $Q/P$ is an integer, and the immediately stopped decomposition (with no denominators) is optimal.
\end{lemma}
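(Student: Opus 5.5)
The plan is to prove the first part, the unit-fraction extremality together with uniqueness, for the nondecreasing convention $\sigma=\leq$. This suffices for both conventions, for two reasons: $\mathcal{E}_m^{<}\subseteq\mathcal{E}_m^{\leq}$, and the greedy tuple $(T_0+1,T_1+1,\ldots,T_{m-1}+1)$ is strictly increasing. So a unique maximizer in the larger class is also the unique maximizer in the smaller one. The closed form \eqref{eq:unit-fraction-best-formula} is a telescoping computation: $\frac1{T_j}-\frac1{T_j+1}=\frac1{T_j(T_j+1)}=\frac1{T_{j+1}}$. Writing $y_i:=T_{i-1}+1$, we also record
\[
y_1y_2\cdots y_k=\frac{T_k}{T}.
\]

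For the extremality itself I would follow the classical Curtiss--Soundararajan route. Take any $2\leq x_1\leq\cdots\leq x_m$ with $\sum_{i\leq m}1/x_i<1/T$. For each $k\leq m$, the partial error $e_k:=\frac1T-\sum_{i\leq k}\frac1{x_i}$ is positive and has denominator $Tx_1\cdots x_k$. Hence
\[
e_k\geq\frac{1}{Tx_1\cdots x_k}.
\]
The central tool is the majorization lemma (Soundararajan's form of Curtiss' lemma): if $u_1\geq\cdots\geq u_n>0$ and $v_1\geq\cdots\geq v_n>0$ satisfy $\prod_{i\leq k}u_i\leq\prod_{i\leq k}v_i$ for all $k$, then $\sum u_i\leq\sum v_i$, with equality only if the sequences coincide. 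I would prove this by Abel summation and AM--GM, or by induction on $n$. I would apply it to $u_i=1/x_i$ and $v_i=1/y_i$.

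This requires $\prod_{i\leq k}x_i\geq T_k/T$ for every $k\leq m$, and establishing this is the main obstacle. I would prove it by strong induction on $k$, arguing by contradiction. Let $k$ be the first index with $x_1\cdots x_k<T_k/T$. Then the prefix inequalities hold up to $k-1$, and the lemma applied to the first $k-1$ terms gives $e_{k-1}\geq 1/T_{k-1}$. Now:
\begin{itemize}
\item If $e_{k-1}=1/T_{k-1}$, then equality in the lemma forces $x_i=y_i$ for $i<k$. Since $1/x_k<e_{k-1}$ gives $x_k\geq T_{k-1}+1=y_k$, this yields $\prod_{i\leq k}x_i\geq T_k/T$, a contradiction.
\item If $e_{k-1}>1/T_{k-1}$, the bound $x_k>1/e_{k-1}$ is not directly comparable with $y_k$. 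The same difficulty appears in Curtiss' original proof. There I would instead apply the lemma to the $(k+1)$-term sequences $(1/x_1,\ldots,1/x_k,e_k)$ and $(1/y_1,\ldots,1/y_k,1/T_k)$. Both sum to $1/T$. Their product comparison follows from $e_k\geq 1/(Tx_1\cdots x_k)$, once the terms are sorted in the correct order. The strictness of equality then contradicts the assumed product deficit.
\end{itemize}
I expect this ordering and case analysis to need the most care. Once the product inequalities are established, the lemma gives $\sum 1/x_i\leq\sum 1/y_i$, with equality only for $x=y$. This is exactly uniqueness.

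The Bellman boundary condition then follows quickly. Let $(P,Q,L)$ be admissible with $T:=Q/P\in\mathbb N$; admissibility $LP<Q$ gives $T>L$. The trivial decomposition is therefore legal and has value $\mathcal{H}(T)$. Now take any terminal decomposition $\frac1T=\sum_{i\leq m}\frac1{x_i}+\frac1{T'}$ with $m\geq1$. Its $m$ denominators form an underapproximation of $1/T$, so the first part gives $\frac1{T'}\geq\frac1T-R_m^\sigma(\frac1T)=\frac1{T_m}$, that is, $T'\leq T_m$. Property \ref{item:payoff-H2} together with iteration of \ref{item:payoff-H1} yields
\[
2^{-m}\mathcal{H}(T')\leq 2^{-m}\mathcal{H}(T_m)=\mathcal{H}(T).
\]
Hence $\mathcal{B}_\sigma(P,Q,L)=\mathcal{H}(T)$, and the immediately stopped decomposition attains it.
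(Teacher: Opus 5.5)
Your reduction to the nondecreasing convention, the telescoping identity and the derivation of the Bellman boundary condition are all fine; the last part is exactly the paper's argument. The paper does not reprove the extremality itself. It cites Curtiss--Takenouchi for $T=1$ and Nathanson's Theorem~5 for $T\geq2$.

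\textbf{The gap.} Your self-contained extremality proof rests on the claim that every competitor satisfies $x_1\cdots x_k\geq T_k/T$ for all $k\leq m$. This claim is false. Take $T=1$ and $(x_1,x_2,x_3)=(2,4,5)$. Then $\frac12+\frac14+\frac15=\frac{19}{20}<1$, but $2\cdot4\cdot5=40<42=T_3$.

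This example falls precisely in your Case~2: $k=3$ and $e_2=\frac14>\frac16=\frac1{T_2}$. So no argument there can produce the contradiction you promise. Concretely, the sequences $(\frac12,\frac14,\frac15,\frac1{20})$ and $(\frac12,\frac13,\frac17,\frac1{42})$ are already nonincreasing and both sum to $1$. Their prefix products are $\frac12,\frac18,\frac1{40},\frac1{800}$ and $\frac12,\frac16,\frac1{42},\frac1{1764}$ respectively. Neither sequence dominates the other, so the majorization lemma applies in neither direction.

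\textbf{The missing idea.} You must use the hypothesis that the competitor is at least as good as the greedy tuple, and split at the \emph{last} deficit index rather than the first. Suppose $\sum_{i\leq m}1/x_i\geq\sum_{i\leq m}1/y_i$, and let $k$ be the largest index with $x_1\cdots x_k<y_1\cdots y_k=T_k/T$.
\begin{itemize}
\item Prefix: $e_k\geq 1/(Tx_1\cdots x_k)>1/T_k$, which gives the strict inequality $\sum_{i\leq k}1/x_i<\sum_{i\leq k}1/y_i$.
\item Tail: for $k<j\leq m$, maximality of $k$ gives $\prod_{i=k+1}^{j}x_i>\prod_{i=k+1}^{j}y_i$. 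Your majorization lemma applied to the tail segments $(1/x_i)_{i>k}$ and $(1/y_i)_{i>k}$ then gives $\sum_{i>k}1/x_i\leq\sum_{i>k}1/y_i$.
\end{itemize}
Adding the two inequalities contradicts the assumption. Hence no deficit index exists, and the lemma together with its equality case yields $x=y$. This gives both the formula for $R_m^{\sigma}(1/T)$ and uniqueness.

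This is Soundararajan's actual argument. It is the same mechanism as the paper's Lemma~\ref{lem:variable-comparison}, where one chooses the largest $r$ with a tail-product deficit. With this repair your route would make the lemma self-contained instead of relying on the cited uniqueness theorems.
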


\begin{proof}
For $T=1$, uniqueness in both conventions is a classical result by Curtiss and Takenouchi \cite{Curtiss,Takenouchi}. For $T\geq2$, Nathanson's theorem for a reduced fraction $p/q$ with $p\mid q+1$ applies with $p=1$ and $q=T$, again giving uniqueness in both conventions \cite[Theorem~5]{Nathanson}. Summing the telescoping identity
\[
\frac1{T_j+1}=\frac1{T_j}-\frac1{T_{j+1}}
\]
in $j$ gives \eqref{eq:unit-fraction-best-formula}.

Now let $(P,Q,L)$ be admissible and put $t=Q/P\in\mathbb{N}$. The immediately stopped decomposition is terminal and has value $\mathcal{H}(t)$. If another terminal decomposition has $m\geq1$ selected denominators $x_1,\ldots,x_m$ and remainder $1/T$, then
\[
\frac1t-\frac1T=\sum_{i=1}^m\frac1{x_i}
\leq R_m^\sigma\Bigl(\frac{1}{t}\Bigr)
=\frac1t-\frac1{\iterPhi{m}(t)}.
\]
Hence $T\leq\iterPhi{m}(t)$, and properties \ref{item:payoff-H1} and \ref{item:payoff-H2} give
\[
2^{-m}\mathcal{H}(T)
\leq2^{-m}\mathcal{H}(\iterPhi{m}(t))
=\mathcal{H}(t).
\]
Taking the supremum over all allowed terminal decompositions of $P/Q$ proves \eqref{eq:bellman-boundary-condition} and the asserted optimality.
\end{proof}

We will repeatedly use the standard finite descent argument underlying the greedy method described by Fibonacci; see \cite{Fibonacci} and, for example, \cite{AmbroBarcau} for a modern formulation. Write a rational remainder, not necessarily in lowest terms, as $u=P/Q$ with integers $0<P\leq Q$, and put $t=Q/P$. If $t\notin\mathbb{N}$ and $x=G(u)=\lfloor t\rfloor+1$, then
\[
u-\frac1x=\frac{P'}{Q'},
\quad
P'=Px-Q=P(x-t),
\quad
Q'=Qx.
\]
Because $0<x-t<1$, the positive integer $P'$ is smaller than $P$. The numerator therefore decreases at every noninteger greedy step, so after finitely many such steps the remainder is a unit fraction. Moreover, for the reciprocal $t'=Q'/P'$ of the new remainder,
\begin{equation}
t'=\frac{Qx}{Px-Q}
=\frac{tx}{x-t}
=t+\frac{P}{P'}t^2
>t(t+1),
\quad
t'>x.
\label{eq:greedy-inverse-growth}
\end{equation}
Here the first strict inequality follows from $P/P'>1$, and the second from $0<x-t<1<t$. Thus the greedy denominators are strictly increasing as well.

This calculation gives the following \emph{drift property} of the payoff function.

\begin{lemma}
\label{lem:greedy-bellman-drift}
Let $0<u\leq1$ be rational, put $t=1/u$ and $x=G(u)$, and let
$t'=(u-1/x)^{-1}$. Then
\[
\frac12\mathcal{H}(t')\geq\mathcal{H}(t),
\]
with equality if and only if $t$ is an integer. Consequently, a greedy control $x$ only increases discounted payoff, even strictly until a unit fraction remainder is reached, and then preserves it along the subsequent unit fraction tail.
\end{lemma}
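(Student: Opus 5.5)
We split into two cases according to whether $t=1/u$ is an integer; both are reduced to the functional equation \ref{item:payoff-H1} and the strict monotonicity \ref{item:payoff-H2} of Lemma~\ref{lem:payoff-construction}.

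\emph{Integer case.} Suppose $t\in\mathbb{N}$. Since $u\leq1$, we have $t\geq1$, and by \eqref{eq:def-of-G}, $x=G(u)=t+1$ (for $t=1$ this is $\max\{2,2\}=2$). Then $u-1/x=1/(t(t+1))$, so $t'=\Phi(t)$. Property \ref{item:payoff-H1} gives $\tfrac12\mathcal{H}(t')=\mathcal{H}(t)$, so equality holds.

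\emph{Noninteger case.} Suppose $t\notin\mathbb{N}$, so $t>1$. Write $u=P/Q$ with $0<P\leq Q$ integers. The Fibonacci descent computation \eqref{eq:greedy-inverse-growth} applies verbatim and gives $t'>t(t+1)=\Phi(t)$. Here the key input is the strict inequality $P/P'>1$, which comes from $0<P'=P(x-t)<P$. Strict monotonicity \ref{item:payoff-H2} then yields $\mathcal{H}(t')>\mathcal{H}(\Phi(t))=2\mathcal{H}(t)$, again by \ref{item:payoff-H1}. This establishes strict inequality exactly when $t\notin\mathbb{N}$, which together with the integer case gives the ``if and only if''. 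We should also note that $t'\geq1$, so that $\mathcal{H}(t')$ is defined; this holds since $t'>t\geq1$.

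\emph{The ``consequently'' statement.} Along a greedy trajectory, every remainder stays in $(0,1]$, because the remainder decreases. Applying the inequality at each step, the discounted value $2^{-j}\mathcal{H}(t_j)$ is nondecreasing in $j$. It increases strictly while $t_j\notin\mathbb{N}$. By the descent argument, some $t_j$ eventually becomes an integer, after which all later $t_j$ remain integers (they are $\Phi$-iterates), and the value stays constant by the equality case.

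There is no real obstacle here. The only points needing care are that $G(u)=\lfloor t\rfloor+1$ for $u\leq1$, which covers the boundary case $u=1$ where $G$ uses the $\max$ with $2$, and that the strictness in \eqref{eq:greedy-inverse-growth} correctly transfers through the strictly increasing $\mathcal{H}$.
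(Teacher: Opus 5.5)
Your proposal is correct and follows the paper's proof. For $t\notin\mathbb{N}$ you use the strict bound $t'>\Phi(t)$ from \eqref{eq:greedy-inverse-growth} together with strict monotonicity of $\mathcal{H}$ and \ref{item:payoff-H1}, and for $t\in\mathbb{N}$ you note that $x=t+1$ and $t'=\Phi(t)$ and apply \ref{item:payoff-H1}, exactly as the paper does. Your extra remarks, namely the boundary case $u=1$ in $G$, that $t'\geq1$, and the iteration behind the ``consequently'' clause, are accurate details the paper leaves implicit.
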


\begin{proof}
If $t\notin\mathbb{N}$, then \eqref{eq:greedy-inverse-growth}, strict monotonicity of $\mathcal{H}$, and \ref{item:payoff-H1} give
\[
\frac12\mathcal{H}(t')
>\frac12\mathcal{H}(\Phi(t))
=\mathcal{H}(t).
\]
If $t\in\mathbb{N}$, then $x=t+1$ and $t'=t(t+1)=\Phi(t)$, so \ref{item:payoff-H1} gives
equality.
\end{proof}

We next associate a positive integer numerator with every rational remainder
obtained from a finite underapproximation. 
Again let $u=P/Q$, $0<P\leq Q$, and $t=Q/P$.
For nondecreasing positive integers $x_1,\ldots,x_m$ whose reciprocal sum $\sum_j 1/x_j$ is smaller than $u$, define denominators $Q_j$ and cleared numerators $C_j$ by \eqref{eq:cleared-numerators}, as before.
Every $C_j$ is a positive integer and this quantity is simply the numerator obtained by ``clearing'' the denominator $Q_j$ from the current remainder. Our main comparison estimate concerns the first index at which $C_j$ becomes smaller than $P$.

To control the first fall below the initial numerator, we use the following standard product--sum comparison. Its special case is implicit in Soundararajan's proof \cite{Soundararajan} and appears explicitly in Kamio \cite[Lemma~6]{Kamio}; the same argument gives the additional generalization needed here.

\begin{lemma}
\label{lem:variable-comparison}
Let $u>0$, let
\[
a_1\geq\cdots\geq a_m>0,
\quad
b_1\geq\cdots\geq b_m>0,
\]
and let $\beta_1,\ldots,\beta_m>0$. Suppose that, for every $1\leq j\leq m$,
\begin{align}
\sum_{i=1}^j a_i+
\beta_j u\prod_{i=1}^j a_i&=u,
\label{eq:variable-reference}\\
\sum_{i=1}^j b_i+
\beta_j u\prod_{i=1}^j b_i&\leq u.
\label{eq:variable-competitor}
\end{align}
Then
\[
\sum_{i=1}^j b_i
\leq
\sum_{i=1}^j a_i\quad\text{for } 1\leq j\leq m.
\]
\end{lemma}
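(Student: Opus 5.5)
The plan is to argue by contradiction at the first index where the conclusion fails. Write
\[
A_j=\sum_{i=1}^j a_i,\qquad B_j=\sum_{i=1}^j b_i,\qquad \Pi^a_j=\prod_{i=1}^j a_i,\qquad \Pi^b_j=\prod_{i=1}^j b_i .
\]
Suppose the claim fails, and let $j$ be the smallest index with $B_j>A_j$. Then $B_i\leq A_i$ for all $i<j$.

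Dividing \eqref{eq:variable-reference} and \eqref{eq:variable-competitor} at this index $j$ by $u>0$ gives
\[
\beta_j\Pi^b_j\leq 1-\frac{B_j}{u}<1-\frac{A_j}{u}=\beta_j\Pi^a_j .
\]
Since $\beta_j>0$, this forces $\Pi^b_j<\Pi^a_j$. The goal is therefore to show that $B_j>A_j$, together with $B_i\leq A_i$ for $i<j$ and the monotonicity of $(b_i)$, forces $\Pi^b_j>\Pi^a_j$. That contradiction proves the lemma.

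To compare the products, use the elementary concavity bound $\log(b/a)\geq (b-a)/b$ for $a,b>0$. Put $c_i=b_i-a_i$, $w_i=1/b_i$, and $C_i=\sum_{k\leq i}c_k=B_i-A_i$. Then
\[
\log\frac{\Pi^b_j}{\Pi^a_j}=\sum_{i=1}^j\log\frac{b_i}{a_i}\geq\sum_{i=1}^j c_iw_i .
\]
Abel summation rewrites the right-hand side as
\[
\sum_{i=1}^j c_iw_i=w_jC_j-\sum_{i=1}^{j-1}C_i\,(w_{i+1}-w_i).
\]
Because $b_1\geq\cdots\geq b_m$, the weights $w_i$ are nondecreasing, so every increment $w_{i+1}-w_i\geq0$. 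By the minimality of $j$ we have $C_i\leq0$ for $i<j$, while $C_j>0$ and $w_j>0$. Hence the whole expression is at least $w_jC_j>0$, giving $\Pi^b_j>\Pi^a_j$, which is the desired contradiction.

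The only genuine step is choosing this concavity-plus-Abel-summation route. It exploits exactly the monotonicity of the $b_i$ and the sign pattern of the partial sums at the first failure. The monotonicity of the $a_i$ and the equations at indices other than $j$ appear not to be needed, although they are harmless. The remaining care is bookkeeping of strict versus non-strict inequalities, and that is handled by the strictness of $C_j>0$.
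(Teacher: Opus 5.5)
Your proof is correct, and it takes a genuinely different route from the paper's.

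The paper argues by strong induction on $j$ with a case split. If $b_j\le a_j$, it uses the previous index. If $\prod_{i\le j}b_i\ge\prod_{i\le j}a_i$, the two relations at $j$ give the conclusion at once. Otherwise it takes the largest $r$ with $\prod_{i=r}^{j}b_i<\prod_{i=r}^{j}a_i$ and notes that every initial segment of the block $r,\ldots,j$ then satisfies the same strict product inequality. It applies the weak-majorization form of Karamata's inequality to the nonincreasing vectors $(\log a_i)_{i=r}^{j}$ and $(\log b_i)_{i=r}^{j}$ with the convex function $\exp$, obtaining $\sum_{i=r}^{j}b_i\le\sum_{i=r}^{j}a_i$, and glues this to the induction hypothesis at $r-1$.

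You run the implication the other way. At the first failure, the relations at $j$ turn $B_j>A_j$ into $\Pi^b_j<\Pi^a_j$, and you then show directly that the sign pattern of the $C_i$ forces $\Pi^b_j>\Pi^a_j$. That sign pattern gives $\sum_{i=k}^{j}b_i>\sum_{i=k}^{j}a_i$ for every $k\le j$. So what you prove inline, via the tangent-line bound for $\log$ and Abel summation, is the concave (suffix-sum) counterpart of the majorization inequality the paper cites, together with its standard proof.

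What your version buys is a shorter, self-contained argument: there is no auxiliary index $r$ and no appeal to an external inequality. Because the tangent slopes are taken only at the $b_i$, you also correctly observe that monotonicity of the $a_i$ is unnecessary; this is harmless in the application, where $a_i=\alpha_i$ is decreasing anyway. The paper's formulation mainly buys contact with the Karamata/Muirhead form in which this comparison appears in earlier proofs.

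One caveat on your closing remark. The relations at indices other than $j$ are not dispensable for the lemma as a whole, since each one is used when that index happens to be the first failure. Only the local contradiction step uses index $j$ alone.
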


\begin{proof}
We argue by strong induction on $j$, starting with the empty sum. If $b_j\leq a_j$, the conclusion follows from the induction hypothesis at $j-1$. Suppose that $b_j>a_j$. If
\[
\prod_{i=1}^j b_i
\geq
\prod_{i=1}^j a_i,
\]
then \eqref{eq:variable-reference} and \eqref{eq:variable-competitor}
immediately give
\[
\sum_{i=1}^j b_i
\leq u-\beta_j u\prod_{i=1}^j b_i
\leq u-\beta_j u\prod_{i=1}^j a_i
=\sum_{i=1}^j a_i.
\]
It remains to treat the case in which the displayed product inequality is reversed.
Let $r$ be the largest index such that
\[
\prod_{i=r}^j b_i
<
\prod_{i=r}^j a_i.
\]
Then $r<j$, and maximality gives
\[
\prod_{i=r}^{\ell}b_i
<
\prod_{i=r}^{\ell}a_i
\quad\text{for } r\leq\ell\leq j;
\]
otherwise, division would produce the same strict inequality on a tail beginning after $r$. Hence the nonincreasing vectors $(\log a_r,\ldots,\log a_j)$ and $(\log b_r,\ldots,\log b_j)$ satisfy the hypotheses of the weak majorization form of Karamata's inequality \cite{Karamata}. Applied to the increasing convex function $s\mapsto e^s$, it yields
\begin{equation}\label{eq:the2sums}
\sum_{i=r}^j b_i
\leq
\sum_{i=r}^j a_i.
\end{equation}
Combining this with the induction hypothesis at $r-1$ completes the proof.

The last comparison can also be obtained from Muirhead's inequality as in \cite{Soundararajan}, or more directly as in \cite[Lemma~1.1]{AmbroBarcau}. Yet another proof of this last comparison can be given by raising the sums \eqref{eq:the2sums} to a large integer power and using the multinomial theorem; we omit the details.
\end{proof}

We now apply the preceding comparison principle at the first index where the cleared numerator falls below the initial numerator $P$. Let
\[
e_0:=u,
\quad
e_j:=\frac{e_{j-1}^2}{1+e_{j-1}}
\quad\text{for } j\geq1
\]
and
\[
\alpha_j:=\frac{e_{j-1}}{1+e_{j-1}}
= \frac{e_j}{e_{j-1}}
= e_{j-1} - e_j
\quad\text{for } j\geq1.
\]
Then $(\alpha_j)_{j\geq1}$ is decreasing and
\begin{equation}
e_j=u\prod_{i=1}^j\alpha_i,
\quad
\sum_{i=1}^j\alpha_i+e_j=u,
\quad
e_j=\frac1{\iterPhi{j}(t)}.
\label{eq:continuous-identities}
\end{equation}

\begin{lemma}
\label{lem:bellman-transition}
Let $P,Q$ be positive integers with $P\leq Q$, put $u=P/Q$ and $t=Q/P$, and let $m\geq1$. Suppose that $x_1\leq\cdots\leq x_m$ are positive integers whose reciprocal sum $\sum_i 1/x_i$ is smaller than $u$. Define $C_j$ by \eqref{eq:cleared-numerators}.

\begin{enumerate}[label=\textup{(\roman*)}]
\item If $C_j\geq P$ for every $1\leq j\leq m$, then
\begin{equation}
\left(
u-\sum_{i=1}^m\frac1{x_i}
\right)^{-1}
\leq\iterPhi{m}(t).
\label{eq:no-decrease-bound}
\end{equation}

\item Suppose that $m$ is the first index for which $C_m=d<P$; equivalently,
\begin{equation}
C_j\geq P,\quad 1\leq j<m,
\quad C_m=d<P.
\label{eq:first-decrease-hypothesis}
\end{equation}
Then
\begin{equation}
\left(
u-\sum_{i=1}^m\frac1{x_i}
\right)^{-1}
<\frac Pd\,\iterPhi{m}(t).
\label{eq:first-decrease-bound}
\end{equation}
If $t_m$ is the reciprocal of the remainder after the $m$th term, then
\begin{equation}\label{eq:first-decrease-order}
t_m > x_m.
\end{equation}
\end{enumerate}
\end{lemma}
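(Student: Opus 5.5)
We will deduce both parts from Lemma~\ref{lem:variable-comparison}, comparing the competitor $b_i:=1/x_i$ (nonincreasing because $x_1\leq\cdots\leq x_m$) with the continuous reference sequence $a_i:=\alpha_i$ from \eqref{eq:continuous-identities}. The key observation is an exact formula for the remainder in terms of the cleared numerators. Since $Q_j=Q\prod_{i\leq j}x_i$, we have
\[
u-\sum_{i=1}^j\frac1{x_i}=\frac{C_j}{Q_j}=\frac{C_j}{P}\,u\prod_{i=1}^j b_i,
\qquad\text{that is,}\qquad
\sum_{i=1}^j b_i+\frac{C_j}{P}\,u\prod_{i=1}^j b_i=u .
\]
Hence, whenever $C_j\geq P$, the competitor inequality \eqref{eq:variable-competitor} holds with $\beta_j=1$. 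The reference identity \eqref{eq:variable-reference} with $\beta_j=1$ is exactly \eqref{eq:continuous-identities}.

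For part (i), all $C_j\geq P$, so Lemma~\ref{lem:variable-comparison} applies with every $\beta_j=1$ and gives $\sum_{i\leq m}1/x_i\leq\sum_{i\leq m}\alpha_i=u-e_m$. Thus the remainder is at least $e_m=1/\iterPhi{m}(t)$, which is \eqref{eq:no-decrease-bound}.

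For part (ii), keep $\beta_j=1$ and $a_j=\alpha_j$ for $j<m$. At the last index, set $\beta_m:=d/P$ and replace $\alpha_m$ by the number $a_m'$ defined by
\[
\sum_{i<m}\alpha_i+a_m'+\frac dP\,e_{m-1}a_m'=u,
\qquad\text{that is,}\qquad
a_m'=\frac{e_{m-1}}{1+(d/P)e_{m-1}} .
\]
We have $e_{m-1}=u\prod_{i<m}\alpha_i$, so this is \eqref{eq:variable-reference} at $j=m$. The competitor satisfies \eqref{eq:variable-competitor} at $j=m$ with equality, since $C_m=d$.

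We must check that the modified reference sequence is still nonincreasing. For $m\geq2$ this amounts to $a_m'\leq\alpha_{m-1}$. It follows from $a_m'\leq e_{m-1}=\alpha_{m-1}e_{m-2}\leq\alpha_{m-1}$, because $e_{m-2}\leq u\leq1$. For $m=1$ there is nothing to check.

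Lemma~\ref{lem:variable-comparison} then gives $\sum_{i\leq m}1/x_i\leq u-e_{m-1}+a_m'$. Therefore the remainder is at least
\[
e_{m-1}-a_m'=\frac dP\cdot\frac{e_{m-1}^2}{1+(d/P)e_{m-1}}
>\frac dP\cdot\frac{e_{m-1}^2}{1+e_{m-1}}=\frac dP\,e_m .
\]
The inequality is strict because $d<P$. Taking reciprocals yields \eqref{eq:first-decrease-bound}.

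For \eqref{eq:first-decrease-order}, write the remainder after the $m$th term as
\[
\frac{C_m}{Q_m}=\frac{d}{Q_{m-1}x_m}.
\]
Since $d<P\leq Q\leq Q_{m-1}$, this is strictly less than $1/x_m$, so $t_m>x_m$.

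The main obstacle I expect is verifying that the modified last reference term $a_m'$ keeps the reference sequence monotone, so that Lemma~\ref{lem:variable-comparison} is applicable. This is exactly where the assumption $P\leq Q$, i.e.\ $u\leq1$, is used. The rest is bookkeeping with the identity for $C_j/Q_j$.
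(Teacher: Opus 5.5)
Your proof is correct and follows the paper's argument essentially step for step. It uses the same cleared-numerator identity, the comparison lemma with all $\beta_j=1$ for part (i), and the same modified last reference term $e_{m-1}/(1+(d/P)e_{m-1})$ with $\beta_m=d/P$ for part (ii). Two checks are slightly more direct than the paper's, but the differences are cosmetic: your monotonicity check $a_m'\le e_{m-1}=\alpha_{m-1}e_{m-2}\le\alpha_{m-1}$ replaces the paper's exact equivalence with $(1-\delta)e_{m-2}^2\le 1$, and your chain $d<P\le Q\le Q_{m-1}$ for $t_m>x_m$ replaces the paper's route through $P\le C_{m-1}\le Q_{m-1}$.
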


\begin{proof}
Put $v_i=1/x_i$. Equation \eqref{eq:cleared-numerators} gives
\begin{equation}
u-\sum_{i=1}^jv_i
=\frac{C_j}{P}\,
u\prod_{i=1}^jv_i.
\label{eq:cleared-product-identity}
\end{equation}
If $C_j\geq P$ for every $j$, Lemma~\ref{lem:variable-comparison}, applied with $a_i=\alpha_i$, $b_i=v_i$, and $\beta_j=1$, gives
\[
\sum_{i=1}^m v_i
\leq
\sum_{i=1}^m\alpha_i.
\]
The actual remainder is therefore at least $e_m$, and \eqref{eq:no-decrease-bound} follows from \eqref{eq:continuous-identities}.

Now assume \eqref{eq:first-decrease-hypothesis} and set
\[
\delta:=\frac dP\in(0,1).
\]
Keep the first $m-1$ reference terms and replace $\alpha_m$ by
\[
\widetilde{\alpha}_m
=\frac{e_{m-1}}{1+\delta e_{m-1}}.
\]
The modified reference sequence is still decreasing. For $m=1$ this is immediate. If $m\geq2$, then $e_{m-2}\leq u\leq1$ and the inequality $\widetilde{\alpha}_m\leq\alpha_{m-1}$ is equivalent to $(1-\delta)e_{m-2}^2\leq1$, which is true as well.
For $j<m$, the identities in \eqref{eq:continuous-identities} remain unchanged, while at $j=m$ we have
\[
\sum_{i=1}^{m-1}\alpha_i+\widetilde{\alpha}_m
+\delta u
\Bigl(\prod_{i=1}^{m-1}\alpha_i\Bigr)
\widetilde{\alpha}_m
=u.
\]
On the competitor side, \eqref{eq:cleared-product-identity} gives the required inequalities with coefficient $1$ for $j<m$ and equality with coefficient $\delta$ for $j=m$. Lemma~\ref{lem:variable-comparison}, with $\beta_j=1$ for $j<m$ and $\beta_m=\delta$, yields
\[
\sum_{i=1}^m v_i
\leq
\sum_{i=1}^{m-1}\alpha_i+
\widetilde{\alpha}_m.
\]
Hence the final remainder is at least
\[
\frac{\delta e_{m-1}^2}{1+\delta e_{m-1}}
>\delta\frac{e_{m-1}^2}{1+e_{m-1}}
=\delta e_m.
\]
Taking reciprocals proves \eqref{eq:first-decrease-bound}.

Finally, let $Q_{m-1}$ be as in \eqref{eq:cleared-numerators}. We have $C_{m-1}\leq Q_{m-1}$, since the remainder before the last term is at most $1$.
From $d<P\leq C_{m-1}\leq Q_{m-1}$ and $t_m=Q_{m-1}x_m/d$, we obtain $t_m>x_m$, which is \eqref{eq:first-decrease-order}.
\end{proof}

We now express these transition estimates as a \emph{Bellman equation} on the part of the admissible region where $P\leq Q$. Fix $\sigma$ to be either $\leq$ or $<$, and let $s=(P,Q,L)$ be an admissible state with $P\leq Q$. A finite block of controls $\mathbf{x}=(x_1,\ldots,x_r)$ of length $r=r(\mathbf{x})\geq1$ is called a \emph{first descent block} from $s$ if $x_1,\ldots,x_m\geq2$,
\begin{align*}
L\leq x_1\leq\cdots\leq x_r
& \quad\text{for nondecreasing denominators } (\sigma=\leq), \\
L<x_1<\cdots<x_r
& \quad\text{for strictly increasing denominators } (\sigma=<),
\end{align*}
sum of its reciprocals $\sum_i 1/x_i$ is smaller than $P/Q$, and cleared numerators $C_j$ of all partial reciprocal sums $\sum_{i=1}^j 1/x_i$, given by \eqref{eq:cleared-numerators}, satisfy
\begin{equation}
C_j\geq P \quad \text{for } 1\leq j<r,
\quad
d:=C_r<P.
\label{eq:first-descent-block}
\end{equation}
Let $Q_r$ be the common denominator from \eqref{eq:cleared-numerators} and write
\[
s[\mathbf{x}] := s[x_1]\cdots[x_r] = (d,Q_r,x_r).
\]
Lemma~\ref{lem:bellman-transition}\,(ii) shows that this new state is admissible and has numerator $d<P$. Every admissible state with $P\leq Q$ has a terminal decomposition: one stops immediately if its reciprocal remainder is an integer, and otherwise its strictly increasing greedy continuation reaches a unit fraction. The following Bellman equation separates decompositions according to their first numerator descent; decompositions with no such descent are bounded by $\mathcal{H}(t)$.

\begin{lemma}
Let $s=(P,Q,L)$ be admissible with $P\leq Q$, and put $t=Q/P$.
If $t\in\mathbb{N}$, then the boundary condition
\eqref{eq:bellman-boundary-condition} holds. If $t\notin\mathbb{N}$, then
\begin{equation}
\mathcal{B}_\sigma(s)
=\sup_{\mathbf{x}}
2^{-r(\mathbf{x})}
\mathcal{B}_\sigma(s[\mathbf{x}]),
\label{eq:stopped-bellman-equation}
\end{equation}
where the supremum is over all first descent blocks $\mathbf{x}$ from $s$ under the denominator convention $\sigma$.
Moreover,
\begin{equation}
\mathcal{B}_\sigma(s)>\mathcal{H}(t)
\quad\text{when }t\notin\mathbb{N}.
\label{eq:bellman-strict-obstacle}
\end{equation}
\end{lemma}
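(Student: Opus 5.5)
The case $t\in\mathbb{N}$ is exactly Lemma~\ref{lem:unit-fraction-best}, so assume $t\notin\mathbb{N}$. Then $\mathcal{O}(s)=-\infty$, and every terminal decomposition from $s$ has $m\geq1$ selected denominators. I will prove \eqref{eq:stopped-bellman-equation} by establishing the two inequalities separately.

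For ``$\geq$'', I take a first descent block $\mathbf{x}$ and any terminal decomposition from $s[\mathbf{x}]$. Concatenating them gives a terminal decomposition from $s$ with $r(\mathbf{x})$ additional controls. The ordering conditions glue correctly, because the last entry of $s[\mathbf{x}]$ is $x_r$, which is also the cutoff for the continuation. The value is multiplied by $2^{-r(\mathbf{x})}$, and taking suprema gives $\mathcal{B}_\sigma(s)\geq 2^{-r}\mathcal{B}_\sigma(s[\mathbf{x}])$.

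For ``$\leq$'', I fix a terminal decomposition $P/Q=\sum_{i=1}^m 1/x_i+1/T$ and track its cleared numerators $C_j$. There are two cases.
\begin{itemize}
\item If some $C_j<P$, let $r$ be the first such index. Then $(x_1,\ldots,x_r)$ is a first descent block, and the remaining terms form a terminal decomposition from $s[\mathbf{x}]$. Hence its value is at most $2^{-r}\mathcal{B}_\sigma(s[\mathbf{x}])$.
\item If $C_j\geq P$ for all $j\leq m$, Lemma~\ref{lem:bellman-transition}\,(i) gives $T\leq\iterPhi{m}(t)$. By \ref{item:payoff-H1} and \ref{item:payoff-H2}, the value satisfies $2^{-m}\mathcal{H}(T)\leq\mathcal{H}(t)$.
\end{itemize}
So $\mathcal{B}_\sigma(s)\leq\max\{\sup_{\mathbf{x}}2^{-r}\mathcal{B}_\sigma(s[\mathbf{x}]),\,\mathcal{H}(t)\}$. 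To absorb the second term into the supremum, it therefore suffices to show that this supremum is strictly greater than $\mathcal{H}(t)$. That is also the content of \eqref{eq:bellman-strict-obstacle}.

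For this I use the greedy completion. Starting at $s$, I apply greedy controls $x=G(\cdot)$; in the strict case I need $x_1>L$. Admissibility $LP<Q$ gives $L<t$, and $t\notin\mathbb{N}$ gives $G(u)=\lfloor t\rfloor+1>L$. Greedy steps increase strictly thereafter by \eqref{eq:greedy-inverse-growth}, so both conventions are respected. By the Fibonacci descent, the very first greedy step already has $C_1=P(x-t)<P$, so $\mathbf{x}=(G(u))$ is a first descent block with $r=1$. The resulting state $s[\mathbf{x}]$ is admissible with reciprocal remainder $t'$. Continuing greedily until a unit fraction is reached gives a terminal decomposition from $s[\mathbf{x}]$. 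By Lemma~\ref{lem:greedy-bellman-drift}, each step does not decrease the discounted payoff, and the first step strictly increases it. Hence
\[
\tfrac12\mathcal{B}_\sigma(s[\mathbf{x}])\geq\tfrac12\mathcal{H}(t')>\mathcal{H}(t),
\]
which proves \eqref{eq:bellman-strict-obstacle} and completes the ``$\leq$'' direction. The step where the drift must be chained along the whole greedy tail needs care: apply Lemma~\ref{lem:greedy-bellman-drift} repeatedly and telescope the factors $2^{-1}$.

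The main obstacle is bookkeeping rather than conceptual. I need to check that the cases ``no descent'' and ``first descent at $r$'' exhaust all terminal decompositions, and that every state encountered satisfies $P\leq Q$ or is admissible where Lemma~\ref{lem:bellman-transition} is invoked. The latter holds because all remainders are at most $u\leq1$.
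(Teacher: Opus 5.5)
Your proposal is correct and follows essentially the same route as the paper. Concatenation gives one inequality, and splitting an arbitrary terminal decomposition at its first descent index gives the other, except for the no-descent branch, which Lemma~\ref{lem:bellman-transition}\,(i) bounds by $\mathcal{H}(t)$; the one-step greedy block together with Lemma~\ref{lem:greedy-bellman-drift} then shows the supremum strictly exceeds $\mathcal{H}(t)$, absorbing that branch and yielding \eqref{eq:bellman-strict-obstacle}. The only cosmetic difference is that the paper phrases this last step through the value $W$ of the full greedy decomposition from $s$, whereas you bound $\tfrac12\mathcal{B}_\sigma(s[\mathbf{x}])\geq\tfrac12\mathcal{H}(t')>\mathcal{H}(t)$ directly.
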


\begin{proof}
The integer case is Lemma~\ref{lem:unit-fraction-best}. Suppose that $t\notin\mathbb{N}$. The first greedy control $x=\lfloor t\rfloor+1$ satisfies
\[
C_1=Px-Q\in\{1,2,\ldots,P-1\},
\]
so it is a first descent block of length one. The greedy numerator descent terminates at a unit fraction, and the greedy denominators are strictly increasing. Since $L<t<x$, this gives a terminal decomposition from $s$.
By Lemma~\ref{lem:greedy-bellman-drift}, its value $W$ satisfies
\begin{equation}
W>\mathcal{H}(t).
\label{eq:greedy-obstacle-gap}
\end{equation}
The suffix after the first greedy control is a terminal decomposition from the corresponding successor, so the right-hand side of \eqref{eq:stopped-bellman-equation} is at least $W$.

Concatenating a first descent block $\mathbf{x}$ with a terminal decomposition from $s[\mathbf{x}]$ gives a terminal decomposition from $s$ and multiplies its value by $2^{-r(\mathbf{x})}$. Hence the right-hand side of \eqref{eq:stopped-bellman-equation} is at most $\mathcal{B}_\sigma(s)$.

Conversely, consider an arbitrary terminal decomposition from $s$. If its cleared numerator never falls below $P$, Lemma~\ref{lem:bellman-transition}\,(i), monotonicity of $\mathcal{H}$, and \ref{item:payoff-H1} show that its value is at most $\mathcal{H}(t)$, and hence, by \eqref{eq:greedy-obstacle-gap}, smaller than the right-hand side of \eqref{eq:stopped-bellman-equation}. Otherwise, stop the decomposition at its first index $r$ with $C_r<P$. Its remaining part is a terminal decomposition from $s[\mathbf{x}]$, so its value is at most $2^{-r}\mathcal{B}_\sigma(s[\mathbf{x}])$. Taking the supremum over all terminal decompositions proves the reverse inequality in \eqref{eq:stopped-bellman-equation}. Since $\mathcal{B}_\sigma(s)\geq W>\mathcal{H}(t)$, we also obtain \eqref{eq:bellman-strict-obstacle}.
\end{proof}

%%%%%

\subsection{Finiteness and attainment of the Bellman functions}

We now verify the remaining Bellman properties on admissible states with $P\leq Q$: finiteness, attainment, and a growth estimate uniform in $Q$ and $L$ when the numerator $P$ is fixed. The proof is a strong induction on $P$. For a control trajectory, the induction hypothesis cannot necessarily be used after one step, since the new numerator may be at least $P$ and the new state may lie outside the admissible region.
Lemma~\ref{lem:bellman-transition} solves both problems at the first index at which the cleared numerator falls below the initial numerator $P$ of the block: the new numerator is smaller and \eqref{eq:first-decrease-order} places the new state back in the admissible region. We therefore group the initial string of controls ending at that first descent and then apply the induction hypothesis.

Define a sequence of nonnegative constants recursively by
\[
\Gamma_1:=0,
\quad
\Gamma_k := \frac12 \Bigl( \log (2k) + \max_{1\leq \ell<k}\Gamma_\ell \Bigr)\quad\text{for } k\geq2.
\]

\begin{lemma}
\label{lem:bellman-majorant}
If $\sigma$ is either $\leq$ or $<$, then for every admissible state $(P,Q,L)$ with $P\leq Q$,
\begin{equation}
\mathcal{H}(Q/P)\leq\mathcal{B}_\sigma(P,Q,L)
\leq\mathcal{H}(Q/P)+\Gamma_P<\infty.
\label{eq:bellman-envelope-bound}
\end{equation}
The first inequality is an equality when $P\mid Q$ and it is strict otherwise.
Moreover, if $\mathbf{x}$ is a first descent block of length $r$ with successor $(d,Q_r,x_r)$, then
\begin{equation}
2^{-r}\bigl(\mathcal{H}(Q_r/d)+\Gamma_d\bigr)
<\mathcal{H}(Q/P)+\Gamma_P.
\label{eq:concrete-main-inequality}
\end{equation}
\end{lemma}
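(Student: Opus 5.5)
Prove \eqref{eq:concrete-main-inequality} first, since the upper bound in \eqref{eq:bellman-envelope-bound} will follow from it by strong induction on $P$. The lower bound and its equality/strictness clause are already available: equality when $P\mid Q$ is the boundary condition \eqref{eq:bellman-boundary-condition} of Lemma~\ref{lem:unit-fraction-best}, and strictness otherwise is \eqref{eq:bellman-strict-obstacle}.

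For \eqref{eq:concrete-main-inequality}, fix a first descent block of length $r$ with successor $(d,Q_r,x_r)$, $d<P$. By \eqref{eq:first-decrease-bound} of Lemma~\ref{lem:bellman-transition}, $Q_r/d<(P/d)\,\iterPhi{r}(t)$ with $t=Q/P$. We want to bound $\mathcal{H}$ of a multiple of $\iterPhi{r}(t)$, and \ref{item:payoff-H3} gives the crude Lipschitz-type estimate
\[
\mathcal{H}(\lambda y)\leq \log(\lambda y)+\log 2\leq \mathcal{H}(y)+\log\lambda+\log 2
\quad\text{for }\lambda\geq1,\ y\geq1.
\]
Combining this with monotonicity \ref{item:payoff-H2} and \ref{item:payoff-H1},
\[
2^{-r}\mathcal{H}(Q_r/d)<2^{-r}\bigl(2^r\mathcal{H}(t)+\log(2P/d)\bigr)\leq \mathcal{H}(t)+2^{-r}\log(2P).
\]
Adding $2^{-r}\Gamma_d$ and using $d\leq P-1$, the remaining task is
\[
2^{-r}\Bigl(\log(2P)+\max_{\ell<P}\Gamma_\ell\Bigr)\leq\Gamma_P,
\]
which is exactly the definition of $\Gamma_P$ when $r=1$ and is weaker for $r\geq2$. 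The strictness comes from \eqref{eq:first-decrease-bound}. If $P=1$ there are no descent blocks, so this case is vacuous.

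For the upper bound in \eqref{eq:bellman-envelope-bound}, argue by strong induction on $P$. If $P\mid Q$, the bound is the boundary case (and this covers $P=1$). Otherwise use the stopped Bellman equation \eqref{eq:stopped-bellman-equation}. Each successor $(d,Q_r,x_r)$ is admissible by \eqref{eq:first-decrease-order}, and it has $d<P$. The one point to verify is $d\leq Q_r$. This holds because the remainder $d/Q_r$ is below $P/Q\leq1$.

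The induction hypothesis then gives $\mathcal{B}_\sigma(s[\mathbf{x}])\leq\mathcal{H}(Q_r/d)+\Gamma_d$. Multiplying by $2^{-r}$ and applying \eqref{eq:concrete-main-inequality} bounds every term of the supremum by $\mathcal{H}(Q/P)+\Gamma_P$, which gives the claim. Finiteness is then immediate.

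The main obstacle is checking that the loss factor $P/d$ in Lemma~\ref{lem:bellman-transition}(ii) is absorbed by the $\log(2k)$ in the recursion for $\Gamma_k$, uniformly in $Q$, $L$ and $r$. The estimate above shows it is, because the factor $2^{-r}$ only helps.
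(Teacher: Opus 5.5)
Your proposal is correct and follows the paper's proof essentially step for step. The key estimate is the same chain $2^{-r}\bigl(\mathcal{H}(Q_r/d)+\Gamma_d\bigr)<\cdots\leq\mathcal{H}(t)+2^{-r}\bigl(\log(2P)+\max_{\ell<P}\Gamma_\ell\bigr)\leq\mathcal{H}(t)+\Gamma_P$, built from \eqref{eq:first-decrease-bound}, \ref{item:payoff-H1}--\ref{item:payoff-H3} and the recursion for $\Gamma_P$; it is followed by strong induction on $P$ through the stopped Bellman equation, with the lower bound read off from the boundary condition and \eqref{eq:bellman-strict-obstacle}. Your explicit check that $d<Q_r$, so that the induction hypothesis (stated for states with $P\leq Q$) applies to the successor, is a small detail the paper leaves implicit.
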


\begin{proof}
Put $t=Q/P$. For a first descent block, write $t_r=Q_r/d$.
Lemma~\ref{lem:bellman-transition}\,(ii), strict monotonicity of $\mathcal{H}$, and \ref{item:payoff-H1}--\ref{item:payoff-H3} give
\begin{align}
2^{-r}\bigl(\mathcal{H}(t_r)+\Gamma_d\bigr)
&<2^{-r}\left(
\mathcal{H}\Bigl(\frac Pd\,\iterPhi{r}(t)\Bigr)+\Gamma_d
\right)\nonumber\\
&\leq2^{-r}\left(
\log\Bigl(\frac Pd\,\iterPhi{r}(t)\Bigr)+\log 2+\Gamma_d
\right)\nonumber\\
&\leq2^{-r}\left(
\mathcal{H}(\iterPhi{r}(t))+\log P+\log 2+\Gamma_d
\right)\nonumber\\
&\leq\mathcal{H}(t)+2^{-r}
\Bigl(\log P+\log 2+\max_{1\leq \ell<P}\Gamma_\ell\Bigr)\nonumber\\
&\leq\mathcal{H}(t)+\Gamma_P.
\label{eq:first-decrease-continuation-bound}
\end{align}
Here we used $d\geq1$, the lower logarithmic bound for
$\mathcal{H}(\iterPhi{r}(t))$, property~\ref{item:payoff-H1}, and $r\geq1$.
This proves \eqref{eq:concrete-main-inequality}.

We now perform the promised Bellman induction on $P$. When $P=1$, the reciprocal remainder $Q/P$ is an integer, so Lemma~\ref{lem:unit-fraction-best} gives $\mathcal{B}_\sigma(1,Q,L)=\mathcal{H}(Q)$.
Assume the upper bound in \eqref{eq:bellman-envelope-bound} for all smaller positive numerators. The same boundary condition settles the case $P\mid Q$.
If $P\nmid Q$, the stopped Bellman equation \eqref{eq:stopped-bellman-equation}, the induction hypothesis for every $d<P$, and \eqref{eq:concrete-main-inequality} yield
\[
\mathcal{B}_\sigma(P,Q,L)
\leq\mathcal{H}(Q/P)+\Gamma_P.
\]
This is the Bellman induction: a decomposition is stopped when its cleared numerator first falls below $P$, and \eqref{eq:concrete-main-inequality} returns the estimate to the initial state.

Finally, the lower bound and its equality statement follow from \eqref{eq:bellman-boundary-condition} and \eqref{eq:bellman-strict-obstacle}. This completes the induction and the proof.
\end{proof}

The next lemma proves existence of maximizing terminal decompositions.

\begin{lemma}
\label{lem:bellman-attainment}
If $\sigma$ is either $\leq$ or $<$, then the supremum defining $\mathcal{B}_\sigma(P,Q,L)$ is attained at every admissible state with $P\leq Q$.
\end{lemma}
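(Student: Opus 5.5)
We prove attainment by strong induction on the numerator $P$, using the stopped Bellman equation \eqref{eq:stopped-bellman-equation} together with the strict gap \eqref{eq:concrete-main-inequality}.

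\emph{Base and boundary cases.} If $P\mid Q$, which includes the case $P=1$, Lemma~\ref{lem:unit-fraction-best} shows that the immediately stopped decomposition attains $\mathcal{B}_\sigma(s)=\mathcal{H}(Q/P)$.

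\emph{Inductive step: reduction to finitely many blocks.} Assume $P\nmid Q$ and that attainment holds at every admissible state with numerator smaller than $P$ and with numerator at most the denominator. Successor states $s[\mathbf{x}]=(d,Q_r,x_r)$ of first descent blocks are admissible with $d<P$, and $d\leq Q_r$ because the remainder is below $1$. So each $\mathcal{B}_\sigma(s[\mathbf{x}])$ is attained.

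It therefore suffices to show that the supremum in \eqref{eq:stopped-bellman-equation} is a maximum. Concatenating a maximizing block with a maximizing terminal decomposition of its successor then gives a maximizing terminal decomposition from $s$.

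Set $\varepsilon:=\mathcal{B}_\sigma(s)-\mathcal{H}(t)>0$, which is positive by \eqref{eq:bellman-strict-obstacle}. By Lemma~\ref{lem:bellman-majorant} and the first line of \eqref{eq:first-decrease-continuation-bound}, every block satisfies
\[
2^{-r}\mathcal{B}_\sigma(s[\mathbf{x}])\leq 2^{-r}\bigl(\mathcal{H}(t_r)+\Gamma_d\bigr)<2^{-r}\Bigl(\mathcal{H}\Bigl(\frac Pd\iterPhi{r}(t)\Bigr)+\Gamma_d\Bigr)\leq \mathcal{H}(t)+2^{-r}\bigl(\log P+\log2+\max_{\ell<P}\Gamma_\ell\bigr).
\]
Hence blocks of length $r$ with $2^{-r}(\log 2P+\max_{\ell<P}\Gamma_\ell)<\varepsilon/2$ have value below $\mathcal{B}_\sigma(s)-\varepsilon/2$. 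They can be discarded, which bounds $r\leq r_0$.

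For each fixed $r\leq r_0$ we still need finiteness of the relevant blocks. We show by induction on $j$ that each $x_j$ is bounded, fixing $x_1,\ldots,x_{j-1}$.

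If $x_j$ is huge while $j<r$, then all subsequent $x_i\geq x_j$ are also huge. The cleared numerator identity \eqref{eq:cleared-product-identity} shows that the remainder after step $r$ is at least roughly the remainder after step $j-1$ minus $(r-j+1)/x_j$. So $t_r$ stays bounded by a constant depending only on the prefix, up to a factor close to $1$.

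This prefix remainder is $u_{j-1}=C_{j-1}/Q_{j-1}$ with $C_{j-1}\geq P$. Comparing with case (i) of Lemma~\ref{lem:bellman-transition} applied to the first $j-1$ steps gives $u_{j-1}^{-1}\leq\iterPhi{j-1}(t)$. The resulting value is then at most roughly
\[
2^{-r}\bigl(\mathcal{H}(\iterPhi{j-1}(t))+O(1)\bigr)+2^{-r}\Gamma_d=2^{-(r-j+1)}\mathcal{H}(t)+O(2^{-r}).
\]
This is strictly below $\mathcal{H}(t)+\varepsilon/2$ unless the additive constants spoil it.

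\emph{Expected obstacle.} The delicate point is exactly this last step: the additive constants $\log P$ and $\Gamma_d$ are not small compared with $\varepsilon$. I would handle it by pointing out that, as $x_j\to\infty$ with the prefix fixed, $t_r$ converges to a finite limit $t^*_r\leq(r-j+1)$-fold-adjusted $u_{j-1}^{-1}$. Meanwhile the blocks of length $j-1$ ending in descent have already been counted. Then the value of such blocks is bounded by $2^{-r}(\mathcal{H}(t^*_r\cdot(1+o(1)))+\Gamma_P)$, with $t^*_r$ essentially $u_{j-1}^{-1}$ multiplied by a bounded factor.

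If this is still not below $\mathcal{B}_\sigma(s)$, I would instead argue by contradiction with a maximizing sequence of blocks with $x_j\to\infty$. Pass to a subsequence in which the successor remainders converge, and compare against a finite competitor in which the tail $x_j,\ldots,x_r$ is replaced by the single greedy term for the prefix remainder. By Lemma~\ref{lem:greedy-bellman-drift}, discounting by $2^{-1}$ instead of $2^{-(r-j+1)}$ yields a strictly larger value. The limiting value is then strictly less than the supremum, a contradiction.

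Consequently only finitely many blocks compete near the supremum, so the supremum in \eqref{eq:stopped-bellman-equation} is a maximum and the induction closes.
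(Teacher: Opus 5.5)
Your setup is fine up to and including the length cutoff. Using $\varepsilon=\mathcal{B}_\sigma(s)-\mathcal{H}(t)$ instead of the paper's $W-\mathcal{H}(t)$ works, because Lemma~\ref{lem:bellman-majorant} makes $\mathcal{B}_\sigma(s)$ finite, and the chain of inequalities you quote does bound $r$.

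The genuine gap is the next step: finiteness of the first descent blocks of a fixed length $r$.
\begin{itemize}
\item You try to exclude a huge $x_j$ by comparing values. The only majorant available at the successor is $\mathcal{B}_\sigma(s[\mathbf{x}])\leq\mathcal{H}(t_r)+\Gamma_d$. Its additive constants ($\log(2P)$ and $\Gamma_d$) need not be dominated by the gap you have, so, as you yourself anticipate, the estimate does not close.
\item The fallback via a maximizing sequence is not a proof either. Bounding the limiting values would again need the same lossy majorant.
\item You never treat $j=r$: your argument starts with ``if $x_j$ is huge while $j<r$''. In that case the replaced tail has length one, so comparing the discount $2^{-1}$ with $2^{-(r-j+1)}$ gives no gain at all.
\end{itemize}

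The missing idea is that no value comparison is needed, because the descent condition is itself an exact arithmetic constraint. Since $C_r=d<P$, the final remainder equals $d/Q_r$. Hence $t_r=Q_r/d>Qx_1\cdots x_r/P\geq Qx_j/P$, which tends to infinity as $x_j\to\infty$. On the other hand, your own observation $u_r\geq u_{j-1}-(r-j+1)/x_j$ (which uses $x_i\geq x_j$ for $i\geq j$) keeps $t_r$ bounded in terms of the fixed prefix once $x_j$ is large. These two facts are incompatible, so a large $x_j$ simply cannot occur in a first descent block, for any $j\leq r$.

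The paper makes this explicit. For fixed $r$ and $d$, identity \eqref{eq:fixed-block-equation} gives $x_1\leq(rQ+d)/P$. It then gives recursively $x_j\leq(q_j+c_j)/u_j$ for $2\leq j\leq r$, including $j=r$. Here, in the paper's indexing, $u_j$ is the remainder \emph{before} $x_j$ is chosen, $c_j=d/(Qx_1\cdots x_{j-1})$, and $q_j=r-j+1$. With $r<M$ and $1\leq d<P$, only a finite family of blocks remains, and over it the supremum in \eqref{eq:stopped-bellman-equation} is a maximum.
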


\begin{proof}
We again use strong induction on $P$. The case $P=1$ and, more generally, every state with $P\mid Q$ is settled by the immediately stopped decomposition in \eqref{eq:bellman-boundary-condition}. Fix $P\geq2$, assume attainment for all smaller positive numerators, and consider an admissible state $s=(P,Q,L)$ with $P\nmid Q$. Put $t=Q/P$. Let $W$ be the value of the complete greedy terminal decomposition from $s$. Recall that it satisfies \eqref{eq:greedy-obstacle-gap}.

For any first descent block of length $r$ with successor $(d,Q_r,x_r)$, Lemma~\ref{lem:bellman-majorant} and the computation in \eqref{eq:first-decrease-continuation-bound} give
\begin{equation}
2^{-r}\mathcal{B}_\sigma(d,Q_r,x_r)
<\mathcal{H}(t)+2^{1-r}\Gamma_P.
\label{eq:first-descent-branch-bound}
\end{equation}
Choose an integer $M\geq2$ such that
\begin{equation}
2^{1-M}\Gamma_P<W-\mathcal{H}(t).
\label{eq:attainment-cutoff}
\end{equation}
No first descent block of length $r\geq M$ can therefore be optimal.

It remains to consider $1\leq r<M$ and $1\leq d<P$. We claim that for fixed $r$ and $d$ there are only finitely many first descent blocks $\mathbf{x}=(x_1,\ldots,x_r)$ from $s$ having $C_r=d$. Such a block satisfies
\begin{equation}
\frac PQ
=\sum_{i=1}^r\frac1{x_i}
+\frac d{Qx_1\cdots x_r}.
\label{eq:fixed-block-equation}
\end{equation}
Since $x_i\geq x_1$,
\[
\frac PQ
\leq\frac r{x_1}+\frac d{Qx_1^r}
\leq\frac{r+d/Q}{x_1},
\]
and hence
\[
x_1\leq\frac{rQ+d}{P}.
\]
Once $x_1,\ldots,x_{j-1}$ have been fixed, for $2\leq j\leq r$ put
\[
u_j := \frac PQ-\sum_{i=1}^{j-1}\frac1{x_i}>0,
\quad
c_j := \frac d{Qx_1\cdots x_{j-1}},
\quad
q_j := r-j+1.
\]
Equation \eqref{eq:fixed-block-equation} implies
\[
u_j
=\sum_{i=j}^r\frac1{x_i}+\frac{c_j}{x_j\cdots x_r}
\leq\frac{q_j}{x_j}+\frac{c_j}{x_j^{q_j}}
\leq\frac{q_j+c_j}{x_j},
\]
so $x_j\leq(q_j+c_j)/u_j$. Induction on $j$ proves the claim.

Let $\mathfrak{F}$ be the finite collection of all such blocks $\mathbf{x}$ with $1\leq r<M$ and $1\leq d<P$, retaining only those satisfying the appropriate order condition and \eqref{eq:first-descent-block}. For every $\mathbf{x}\in\mathfrak{F}$, its successor is admissible and has numerator $d<P$; the induction hypothesis therefore supplies a maximizing terminal decomposition from that successor. Hence every corresponding value
\[
2^{-r(\mathbf{x})}
\mathcal{B}_\sigma(s[\mathbf{x}])
\]
is attained. The first greedy control belongs to $\mathfrak{F}$, because its length is one and $C_1\in\{1,2,\ldots,P-1\}$, so $\mathfrak{F}$ is nonempty.
Let $V$ be the largest of these finitely many values. The complete greedy decomposition is available after its first control; consequently $V\geq W$.

By \eqref{eq:first-descent-branch-bound} and \eqref{eq:attainment-cutoff}, every block omitted because $r\geq M$ has value below $W\leq V$. The stopped Bellman equation \eqref{eq:stopped-bellman-equation} now gives $\mathcal{B}_\sigma(s)=V$. Concatenating the maximizing block with its inductively maximizing continuation produces a maximizing terminal decomposition from $s$. This completes the induction.
\end{proof}

It remains to treat initial remainders $P/Q$ larger than $1$. The next argument shows that only finitely many initial strings of denominators can be relevant: every sufficiently long string is suppressed by the discount factor before the remainder enters the region covered by the preceding induction.

\begin{lemma}
\label{lem:global-bellman-attainment}
If $\sigma$ is either $\leq$ or $<$, then $\mathcal{B}_\sigma(P,Q,L)$ is finite
and its defining supremum is attained at every admissible state $(P,Q,L)$.
\end{lemma}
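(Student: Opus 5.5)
The case $P\leq Q$ is already covered by Lemmas~\ref{lem:bellman-majorant} and~\ref{lem:bellman-attainment}, so assume $P>Q$, i.e.\ the remainder $u=P/Q$ exceeds $1$. Admissibility $LP<Q$ then forces $L=0$, so the state is $(P,Q,0)$ and every control $x\geq2$ is allowed, subject only to the order condition. The plan is to split every terminal decomposition at the first index $k$ for which the remainder $u-\sum_{i\leq k}1/x_i$ is at most $1$. Such an index exists, since a terminal decomposition ends with the remainder $1/T<1$. The prefix $(x_1,\ldots,x_k)$ is a nondecreasing (or strictly increasing) string whose reciprocal sum lies in $[u-1,u)$. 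After this prefix the state is $s'=(C_k,Q_k,x_k)$ with $C_k\leq Q_k$. We must check that $s'$ is admissible, i.e.\ $x_k C_k<Q_k$, which says that the new remainder is less than $1/x_k$. This need not hold. To repair it, I would split not at the first index with remainder at most $1$ but at the first index $k$ at which the remainder drops below $1/x_k$. Such a $k$ exists because the last term satisfies $x_m<T$, so the terminal state is admissible (or $m=0$, in which case the decomposition is trivial).

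The key finiteness step is to bound the prefixes. First I would show that $x_1$ is bounded. Along the prefix we have $u-\sum_{i<j}1/x_i\geq 1/x_{j-1}$, and the sum of the prefix is less than $u$. I do not yet see how these alone bound $x_1$. Instead I would argue by value. If $x_1$ is huge, then reaching remainder at most $1$ requires many terms: at least $(u-1)x_1$ of them, since each term is at most $1/x_1$. The number $k$ of controls before the remainder drops below $1$ is therefore large. Meanwhile the value of any continuation from a state $(C,Q',x)$ with $C\leq Q'$ is, by Lemma~\ref{lem:bellman-majorant}, at most $\mathcal{H}(Q'/C)+\Gamma_C$, and $\Gamma_C$ is bounded uniformly in $C$. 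Indeed $\Gamma_k\leq \tfrac12\log(2k)+\tfrac12\max_{\ell<k}\Gamma_\ell$ gives $\Gamma_k=O(\log k)$, and that is not uniform. So the discount must beat both the reciprocal remainder and $\Gamma_C$.

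Track instead the quantity $2^{-k}(\mathcal{H}(t_k)+\Gamma_{C_k})$ and compare it with the reference obtained from any fixed greedy decomposition of $u$, which has a finite value $W_0$. With $k$ terms each at least $1/x_k$ and the product identity, one gets $C_k\leq Q_k= Q\prod x_i$ and $t_k\leq Q_k$, so the bound is $2^{-k}O(\log Q+\sum_i\log x_i)$. This does not decay when the $x_i$ are huge. I therefore expect the main obstacle to be controlling large denominators in the prefix. I would try to handle it by using the ordering: after the prefix index $j$ where the remainder first drops to at most $1$, apply Lemma~\ref{lem:bellman-transition}-type comparison from that point. Before that point, the terms sum to at least $u-1$ while each is at most $1/x_1$. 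This forces $j\geq (u-1)x_1$, so the discount $2^{-j}$ is super-exponentially small in $x_1$, while the log-size $\sum_{i\leq j}\log x_i$ grows only like $j\log x_j$. The remaining step is to bound $\log x_j$ for the first $j$ with $u-\sum_{i\le j}1/x_i\le 1$: since the remainder before step $j$ exceeds $1$ and is at most $u$, there are boundedly many options for how large $x_j$ may be relative to the remainder, and $x_j$ itself can be absorbed because a large final prefix term leaves remainder near $1$, with $t\approx1$ and $\mathcal{H}$ bounded. From there Lemma~\ref{lem:bellman-majorant} applies.

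Granting that long or large prefixes are uniformly dominated by $W_0$, only finitely many prefixes remain, for the same reason as the counting argument in Lemma~\ref{lem:bellman-attainment}: fixing the length and the final numerator bounds each $x_j$ successively. For each remaining prefix, Lemmas~\ref{lem:bellman-majorant} and~\ref{lem:bellman-attainment} give a finite, attained value at the admissible successor. Maximizing over this finite list and concatenating the optimal prefix with its optimal continuation then yields both finiteness and attainment.
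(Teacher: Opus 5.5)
Your first paragraph picks the right splitting point: the first index $r$ with $u_r<1/x_r$. This is exactly where the paper splits, and there the successor $(C_r,Q_r,x_r)$ is admissible with $C_r<Q_r$. But the step the whole lemma rests on is never proved, namely that only finitely many such prefixes can compete with a fixed decomposition of value $W$. You write ``granting that long or large prefixes are uniformly dominated by $W_0$''.

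The heuristics you offer for that step revert to the other split, the first index $j$ with remainder at most $1$, and there they fail. First, at that index the remainder lies in $(1-1/x_j,1]$, so it exceeds $1/x_j$ and the state is never admissible. More seriously, $x_j$ is not controlled well enough. For $\lambda=2$ and $j\geq3$, let $x_1,\dots,x_{j-1}$ be the Sylvester numbers $2,3,7,43,\ldots$ and let $x_j=s_j-1$, where $s_j$ is the $j$th Sylvester number. Then $j$ is the first index with remainder at most $1$, but $\log x_j$ grows like $2^j$. Since $C_j\in(Q_j/2,Q_j]$ and $\Gamma_k\geq\tfrac12\log(2k)$, the majorant $2^{-j}\bigl(\mathcal{H}(Q_j/C_j)+\Gamma_{C_j}\bigr)$ of Lemma~\ref{lem:bellman-majorant} does not tend to $0$. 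Your remark that $t\approx1$ keeps $\mathcal{H}$ bounded overlooks the term $\Gamma_{C_j}$. Your closing counting step does not transfer either. In Lemma~\ref{lem:bellman-attainment}, finiteness came from the final cleared numerator $d$ ranging over $\{1,\dots,P-1\}$. At your split, $C_r$ ranges over an unbounded set, so fixing the length and the final numerator does not leave finitely many blocks.

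The missing idea is a one-line doubling estimate at your correct split:
\begin{itemize}
\item For $j<r$ one has $u_j\geq1/x_j$, hence $u_{j-1}=u_j+1/x_j\leq2u_j$. Iterating gives $u_{r-1}\geq2^{1-r}\lambda$.
\item At the index $r$ itself, $u_{r-1}=u_r+1/x_r<2/x_r$.
\item Together these give $x_i\leq x_r<2^r/\lambda$ for all $i\leq r$, so $Q_r\leq Qx_r^r<Q2^{r^2}$.
\item Using $\Gamma_k\leq\log(2k)$ and \ref{item:payoff-H3}, Lemma~\ref{lem:bellman-majorant} bounds the successor value by $\log Q_r+2\log2$.
\item Hence every branch of length $r$ has value below $2^{-r}\bigl(\log Q+(r^2+2)\log 2\bigr)$, which tends to $0$.
\end{itemize}
This gives at once the length cutoff $M$ and the finiteness of the remaining blocks, since all their entries are below $2^M$. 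After that, your final maximization and concatenation step goes through.
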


\begin{proof}
The assertion for $P\leq Q$ follows from Lemmas~\ref{lem:bellman-majorant} and~\ref{lem:bellman-attainment}. Suppose that $P>Q$ and put $\lambda=P/Q>1$. Admissibility \eqref{eq:admissible-state} then forces $L=0$.

We first note that a terminal decomposition exists. Choose the least integer $k\geq2$ such that $\sum_{j=2}^k 1/j \geq\lambda$.
If equality holds, then the decomposition with denominators $2,3,\ldots,k-1$ and the remainder $1/k$ is a terminal decomposition in both conventions. Otherwise,
\[
0<u:=\lambda-\sum_{j=2}^{k-1}\frac{1}{j}<\frac{1}{k}.
\]
The usual rational greedy algorithm, starting from $u$ and stopping immediately if $u$ is a unit fraction, terminates after finitely many steps.
If it selects any denominators, the first is larger than $k$ and the selected denominators are strictly increasing. In all cases its terminal denominator is larger than $k$ and than every denominator it selects.
Appending this continuation to $2,3,\ldots,k-1$ again gives a terminal decomposition in both conventions. Fix one such decomposition and denote its positive value by $W$.

Consider any terminal decomposition from $(P,Q,0)$, with the denominators $x_1,\ldots,x_m$ and the remainder $1/T$, and write
\[
u_j:=\lambda-\sum_{i=1}^j\frac{1}{x_i},
\quad
Q_j:=Q\prod_{i=1}^j x_i,
\quad
P_j:=Q_j u_j
\quad\text{for } 0\leq j\leq m.
\]
Each $P_j$ is a positive integer. Here $u_0=\lambda$, $Q_0=Q$, and $P_0=P$. Since $\lambda>1$, we have $m\geq1$, and $u_m=1/T<1/x_m$.
Let $r$ be the first index for which $u_r<1/x_r$. For $j<r$ we have $u_j\geq1/x_j$, and hence
\[
u_{j-1}=u_j+\frac{1}{x_j}\leq2u_j.
\]
At the index $r$,
\[
u_{r-1}=u_r+\frac{1}{x_r}<\frac{2}{x_r}.
\]
It follows that
\begin{equation}
x_r<\frac{2^r}{\lambda}.
\label{eq:entrance-last-denominator}
\end{equation}
Indeed, iteration of the preceding inequalities gives $u_{r-1}\geq2^{1-r}\lambda$.

The state $s_r=(P_r,Q_r,x_r)$ is admissible and satisfies $P_r<Q_r$, because $0<u_r<1/x_r\leq1/2$.
Moreover, the denominators are ordered, so
\begin{equation}
Q_r=Q\prod_{i=1}^r x_i
\leq Qx_r^r<Q2^{r^2}.
\label{eq:entrance-cleared-denominator}
\end{equation}
Conversely, any ordered control block in the convention $\sigma$ for which $r$ is the first index with $u_r<1/x_r$ can be followed by an arbitrary terminal decomposition from $s_r$. Splitting terminal decompositions at this first index therefore gives
\begin{equation}
\mathcal{B}_\sigma(P,Q,0)
=\sup 2^{-r}\mathcal{B}_\sigma(P_r,Q_r,x_r),
\label{eq:entrance-bellman-equation}
\end{equation}
where the supremum is taken over all such initial blocks.

We will use the elementary estimate
\begin{equation}
\Gamma_k\leq\log(2k)\quad\text{for } k\geq1.
\label{eq:gamma-elementary-bound}
\end{equation}
This follows by induction from the definition of $\Gamma_k$.
Indeed, for $k\geq2$ the induction hypothesis gives
\[
\max_{1\leq \ell<k}\Gamma_\ell\leq\log(2(k-1))<\log(2k),
\]
and substitution into the defining recurrence for $(\Gamma_k)_{k\geq1}$ proves \eqref{eq:gamma-elementary-bound}. The Bellman upper bound \eqref{eq:bellman-envelope-bound}, the upper logarithmic bound for $\mathcal{H}$, and \eqref{eq:gamma-elementary-bound} now yield
\begin{align*}
\mathcal{B}_\sigma(P_r,Q_r,x_r)
&\leq\mathcal{H}(Q_r/P_r)+\Gamma_{P_r}\\
&\leq\log(Q_r/P_r)+\log 2+\log(2P_r)\\
&=\log Q_r+2\log 2.
\end{align*}
Consequently, by \eqref{eq:entrance-cleared-denominator},
\begin{equation}
2^{-r}\mathcal{B}_\sigma(P_r,Q_r,x_r)
<\eta_r:=2^{-r}\bigl(\log Q+(r^2+2)\log 2\bigr),
\label{eq:entrance-branch-bound}
\end{equation}
and $\eta_r\to0$ as $r\to\infty$.

Choose an integer $M$ so large that $\eta_r<W$ for every $r\geq M$. There are only finitely many possible initial blocks with $r<M$: by \eqref{eq:entrance-last-denominator}, each entry satisfies $x_i\leq x_r<2^r/\lambda<2^M$. Each successor belongs to the region $P_r<Q_r$, where its Bellman value is already known to be finite and attained. The initial block belonging to the fixed decomposition of value $W$ has length, say, $r_0<M$; otherwise \eqref{eq:entrance-branch-bound} would give
\[
W\leq2^{-r_0}\mathcal{B}_\sigma(P_{r_0},Q_{r_0},x_{r_0})
<\eta_{r_0}<W.
\]
Thus the finite collection is nonempty. Let $V$ be the largest of its branch values. Every branch with $r\geq M$ has value smaller than $W\leq V$, while every shorter branch has value at most $V$. Equation~\eqref{eq:entrance-bellman-equation} gives $\mathcal{B}_\sigma(P,Q,0)=V$. Concatenating a maximizing initial block with a maximizing continuation from its successor attains this value.
\end{proof}

%%%%%

\subsection{Completion of competing underapproximations}

We first show how to complete a finite underapproximation whose error is sufficiently small by a greedy tail. In the nondecreasing convention, the original tuple can be kept intact. In the strictly increasing convention, each collision with the greedy continuation is eliminated by deleting the colliding original denominator and restarting.

\begin{lemma}
\label{lem:completion}
Let $\lambda=p/q>0$ be in lowest terms, fix $\sigma$ to be either $\leq$ or $<$, and let $(a_1,\ldots,a_n)\in\mathcal{E}_n^\sigma$ satisfy $\sum_{i=1}^n 1/a_i <\lambda$.
Put
\[
h=\lambda-
\sum_{i=1}^n\frac1{a_i}.
\]
Assume that $h\leq1$ when $\sigma$ is $\leq$, and that $2^nh\leq1$ when $\sigma$ is $<$. Then there exist an integer $r$ with $0\leq r\leq n$ and a (possibly empty) subtuple $(a_{i_1},\ldots,a_{i_{n-r}})$ obtained by deleting $r$ occurrences from the original denominator tuple, with $i_1<\cdots<i_{n-r}$ when $r<n$, such that, on setting
\[
h'=\lambda-\sum_{j=1}^{n-r}\frac1{a_{i_j}},
\]
one has
\begin{equation}
h'\leq2^r h,
\label{eq:error-doubling-bound}
\end{equation}
and the Bellman value at the initial state satisfies
\begin{equation}
\mathcal{B}_\sigma(p,q,0)
\geq2^{-(n-r)}\mathcal{H}\left(\frac{1}{h'}\right).
\label{eq:completion-value}
\end{equation}
In the nondecreasing denominator convention ($\sigma=\leq$) one may take $r=0$ and $h'=h$.
\end{lemma}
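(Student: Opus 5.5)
Fix the tuple and the error $h$; the plan is to complete the underapproximation by the greedy algorithm applied to the remainder until a unit fraction remainder is reached, and then to bound the value of the resulting terminal decomposition from $(p,q,0)$ by the drift property.

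\emph{Nondecreasing convention.} Since $0<h\leq1$, run the rational greedy algorithm on $h$. Put $t=1/h$. If $t\in\mathbb{N}$, the tuple $(a_1,\ldots,a_n)$ followed by the remainder $1/t$ is already terminal, provided $a_n<t$. This holds because $h<1/a_n$ would fail only if $h\geq 1/a_n$, and in that case we first append greedy copies. More precisely, the greedy denominators $y_1<y_2<\cdots$ satisfy $y_1=G(h)>1/h\geq$ anything allowed, but we must check $y_1\geq a_n$. If $y_1<a_n$, we instead insert $y_1$ into the sorted tuple: order is irrelevant for the value $2^{-m}\mathcal{H}(T)$, only the multiset matters. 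Hence we take the multiset $\{a_i\}\cup\{y_j\}$ sorted, with final remainder $1/T$ where $T>$ all $y_j$ by \eqref{eq:greedy-inverse-growth}. We still need $T>a_n$. This follows because the final remainder $1/T\leq h$ and, if $h<1/a_n$, $T>a_n$. Otherwise we prepend greedy steps as above; the delicate point is then that $T$ exceeds every $a_i$, which requires $1/T<1/a_n$. The greedy tail always ends with remainder $1/T<1/y_k$ and $T$ grows like $\Phi$-iterates, so at the first stage where $T$ is large enough we stop. Then, by Lemma~\ref{lem:greedy-bellman-drift} applied $k$ times, $2^{-(n+k)}\mathcal{H}(T)\geq 2^{-n}\mathcal{H}(1/h)$, and Bellman's definition gives \eqref{eq:completion-value} with $r=0$.

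\emph{Strictly increasing convention.} Here the greedy tail may produce a denominator $y$ equal to some $a_i$. The procedure is as follows: run greedy steps from the current error $h_{\text{cur}}$. At the first collision $y=a_i$, delete $a_i$ from the original tuple, so the error becomes $h_{\text{cur,new}}=h_{\text{cur}}+1/a_i$, and restart greedy completion from the pre-tail state (discarding greedy terms). The aim is to keep $h'\leq 2^r h$. The key inequality is that a collision $y=a_i$ with $y=G(\text{remainder})$ forces $1/a_i<\text{remainder}\leq h_{\text{cur}}$, so the new error is at most $2h_{\text{cur}}$. Greedy terms only decrease the remainder below the pre-tail error, so comparison with the pre-tail error suffices. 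Each restart deletes one original term, so the process stops after $r\leq n$ deletions. The hypothesis $2^nh\leq1$ keeps every intermediate error at most $1$, so greedy applies. After the last restart the greedy tail $y_1<\cdots<y_k$ avoids the surviving $a_{i_j}$; sorting the union gives a strictly increasing tuple, with terminal $T$ exceeding everything, handled as in the previous case. Drift then gives $\mathcal{B}_<(p,q,0)\geq 2^{-(n-r+k)}\mathcal{H}(T)\geq 2^{-(n-r)}\mathcal{H}(1/h')$.

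The main obstacle is the order condition: the terminal decomposition requires all selected denominators to be smaller than $T$, including the retained original $a_i$. I would handle this by noting that if some original $a_i\geq 1/h_{\text{cur}}$ then $1/a_i\leq h_{\text{cur}}$, and by the growth $t'>t(t+1)$ the greedy reciprocal errors exceed every $a_i$ after finitely many steps. Since the value is monotone along greedy steps, stopping at any later unit-fraction stage is harmless, and $T$ can be chosen as large as desired. Unit fraction remainders recur along the tail with $T\mapsto\Phi(T)$, by Lemma~\ref{lem:greedy-bellman-drift} with equality. In the strict case, such continued tails may again collide; these collisions are absorbed by the same deletion and restart rule.
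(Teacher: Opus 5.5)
Your proposal is correct and follows essentially the same route as the paper. Both arguments complete the error greedily and continue along the unit-fraction tail $T\mapsto\Phi(T)$ until the terminal denominator exceeds every retained original denominator. Both then rearrange the multiset into a terminal decomposition and iterate Lemma~\ref{lem:greedy-bellman-drift} to get $2^{-(n-r)-s}\mathcal{H}(T)\geq 2^{-(n-r)}\mathcal{H}(1/h')$. In the strict case you also use the same delete-and-restart rule on collisions, which gives $h_{j+1}<2h_j$ and hence $h'\leq 2^r h\leq 1$.

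Your nondecreasing paragraph is tangled in how it handles the order condition. The fix you state at the end is exactly the paper's device: stop only at a unit-fraction stage whose $T$ exceeds all retained $a_i$, which costs nothing because the discounted value never decreases along greedy steps.
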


\begin{proof}
First suppose that $\sigma$ is $\leq$. Starting from the rational remainder $h$, follow the greedy algorithm until a unit fraction remainder is reached. Continue along its unit fraction tail until the terminal denominator is larger than every original denominator. The reciprocal remainder after each noninteger step exceeds the denominator just selected by \eqref{eq:greedy-inverse-growth}, and the same is immediate from \eqref{eq:unit-tail-recurrence} along the unit fraction tail. Thus the final terminal denominator is also larger than every newly selected denominator. Repetitions are allowed, and after rearrangement the original denominators together with the greedy continuation form a nondecreasing terminal decomposition. Hence $r=0$ and $h'=h$.

Now suppose that $\sigma$ is $<$. Begin with the set
\[
A_0=\{a_1,\ldots,a_n\},
\]
and the error $h_0=h$. From $h_j$, generate the greedy continuation one denominator at a time, continuing along the unit fraction tail if a unit fraction remainder is reached. If a selected denominator first collides with $A_j$, denote it by $y$; discard the partial continuation, delete the original occurrence of $y$, and restart with
\[
A_{j+1}=A_j\setminus\{y\},
\quad
h_{j+1}=h_j+\frac1y.
\]
If there is no collision, stop once the terminal denominator is larger than every element of $A_j$. At the moment when $1/y$ is selected, it is strictly smaller than the current remainder, which is at most $h_j$. Therefore
\begin{equation}
h_{j+1}<2h_j.
\label{eq:single-collision-bound}
\end{equation}
Each restart deletes one original denominator, so the process terminates after at most $n$ restarts. After $j$ restarts, iteration of
\eqref{eq:single-collision-bound} gives
\[
h_j\leq2^jh\leq2^nh\leq1.
\]
Thus, every restarted greedy run lies in the range covered by Lemma~\ref{lem:greedy-bellman-drift}. If $r$ deletions have occurred, the same estimate gives \eqref{eq:error-doubling-bound}. In the final run, no selected denominator belongs to $A'=A_r$. The standard calculation leading to \eqref{eq:greedy-inverse-growth}, together with \eqref{eq:unit-tail-recurrence}, shows that the newly selected denominators are strictly increasing and all smaller than the final terminal denominator, which was also chosen larger than every element of $A'$. Rearrangement therefore gives a strictly increasing terminal decomposition from the initial state and the desired tuple is obtained from the set $A'$.

In either convention, suppose that the final greedy run uses $s$ selected denominators and ends with remainder $1/T$. Iterating the greedy Bellman drift from Lemma~\ref{lem:greedy-bellman-drift} over these $s$ controls gives
\[
2^{-s}\mathcal{H}(T)\geq \mathcal{H}\left(\frac{1}{h'}\right).
\]
The terminal decomposition contains $(n-r)+s$ selected denominators before the final unit fraction remainder, so its discounted payoff is at least $2^{-(n-r)}\mathcal{H}(1/h')$. This decomposition is among those in the definition of $\mathcal{B}_\sigma(p,q,0)$, which proves
\eqref{eq:completion-value}.
\end{proof}

With the completion lemma established, we can compare every sufficiently good competitor underapproximation with a maximizing terminal decomposition and deduce Theorem~\ref{thm:main} from the Bellman function properties.

\begin{proof}[Proof of Theorem~\ref{thm:main}]
Fix $\sigma$ to be either $\leq$ or $<$. Suppose first that $\lambda=1/T$ is a unit fraction. With $(T_j)_{j\geq0}$ defined by $T_0:=T$ and \eqref{eq:unit-tail-recurrence}, Lemma~\ref{lem:unit-fraction-best} gives
\[
R_{m+1}^{\sigma}\Bigl(\frac{1}{T}\Bigr)
=\sum_{j=0}^{m}\frac1{T_j+1}
=R_1^{\sigma}\Bigl(\frac{1}{T}\Bigr)
+R_m^{\sigma}\left(
\frac1T-R_1^{\sigma}\Bigl(\frac{1}{T}\Bigr)
\right)
\]
for every $m\geq1$.
Thus the theorem holds with $n_0=1$. The uniqueness and compatibility assertions follow from the same lemma.

Assume from now on that $\lambda$ is not a unit fraction, and write $\lambda=p/q$ in lowest terms. Apply Lemma~\ref{lem:global-bellman-attainment} to the initial state $(p,q,0)$ and choose a maximizing terminal decomposition. It cannot have length zero, since that would make $\lambda$ a unit fraction. Hence, there are integers $N\geq1$, $T\geq3$, and denominators $a_1,a_2,\ldots,a_N$ satisfying
\[
2\leq a_1\leq\cdots\leq a_N<T
\]
when $\sigma$ is $\leq$, and
\[
2\leq a_1<\cdots<a_N<T
\]
when $\sigma$ is $<$,
such that
\[
\lambda=
\sum_{i=1}^N\frac1{a_i}+\frac1T
\]
and
\[
\Lambda:=2^{-N}\mathcal{H}(T)=\mathcal{B}_\sigma(p,q,0).
\]
With $(T_j)_{j\geq0}$ defined by $T_0:=T$ and \eqref{eq:unit-tail-recurrence} again, for every $k\geq0$ we have
\[
\lambda=
\sum_{i=1}^N\frac1{a_i}
+\sum_{j=0}^{k-1}\frac1{T_j+1}
+\frac1{T_k}.
\]
The selected denominators form a tuple in $\mathcal{E}_{N+k}^{\sigma}$ whose reciprocal sum is $\lambda-1/T_k$. By \ref{item:payoff-H1},
\[
2^{-(N+k)}\mathcal{H}(T_k)=\Lambda.
\]

Consider first the nondecreasing denominator convention. Fix
$k\geq0$ and put $n=N+k$. Suppose, for contradiction, that there
is an $n$-term underapproximation larger than
$\lambda-1/T_k$. Its error $h$ then satisfies
\begin{equation}
h<\frac1{T_k}.
\label{eq:better-error}
\end{equation}
The Bellman completion estimate \eqref{eq:completion-value}, with $r=0$ and $h'=h$, applies because $h<1/T_k\leq1/T<1$ and it would give
\[
\Lambda=\mathcal{B}_{\leq}(p,q,0)
\geq2^{-n}\mathcal{H}\Bigl(\frac{1}{h}\Bigr)
>2^{-n}\mathcal{H}(T_k)
=\Lambda,
\]
a contradiction. Hence
\begin{equation}
R_{N+k}^{\leq}(\lambda)
=\lambda-\frac1{T_k}\quad\text{for } k\geq0.
\label{eq:nondecreasing-best-extension}
\end{equation}

Now consider the strictly increasing convention. Choose a nonnegative integer $k_0$ so large that, whenever $k\geq k_0$ and $n=N+k$,
\begin{equation}
T_k>8,
\quad
\log T_k>2n\log 2.
\label{eq:tail-growth-choice}
\end{equation}
Such a choice is possible because $T\geq3$, so $\mathcal{H}(T)\geq\log T>0$, and
\[
\mathcal{H}(T_k)=2^k\mathcal{H}(T),
\quad
\log T_k\geq \mathcal{H}(T_k)-\log 2.
\]
Thus $\log T_k\geq2^k\mathcal{H}(T)-\log 2$, which eventually dominates $2(N+k)\log 2$. Fix $k\geq k_0$ and put $n=N+k$. Suppose, for contradiction, that there is a strictly increasing $n$-term underapproximation larger than $\lambda-1/T_k$. Its error again satisfies
\eqref{eq:better-error}. Moreover, \eqref{eq:tail-growth-choice} gives
\[
2^nh<\frac{2^n}{T_k}<1.
\]
Lemma~\ref{lem:completion} therefore gives an integer $0\leq r\leq n$ and a modified error $h'$ satisfying \eqref{eq:error-doubling-bound}. Since $r\leq n$, \eqref{eq:tail-growth-choice} gives
\[
\log\Bigl(\frac{T_k}{2^r}\Bigr)
=\log T_k-r\log 2
>\frac12\log T_k>0,
\]
so $T_k/2^r>1$.
If $r=0$, then $2^r\mathcal{H}(T_k/2^r)=\mathcal{H}(T_k)$. If $r=1$, the bounds in \ref{item:payoff-H3} and $T_k>8$ give
\[
2\mathcal{H}(T_k/2)
\geq2\log(T_k/2)
>\log T_k+\log 2
\geq\mathcal{H}(T_k).
\]
If $r\geq2$, then
\[
2^r\mathcal{H}(T_k/2^r)
\geq4\log(T_k/2^r)
>2\log T_k
>\log T_k+\log 2
\geq\mathcal{H}(T_k).
\]
Thus, in every case,
\[
2^r\mathcal{H}(T_k/2^r)\geq\mathcal{H}(T_k).
\]
On the other hand, \eqref{eq:completion-value}, \eqref{eq:error-doubling-bound}, \eqref{eq:better-error}, and strict monotonicity of $\mathcal{H}$ give a contradiction:
\[
\Lambda
\geq2^{-(n-r)}\mathcal{H}\Bigl(\frac{1}{h'}\Bigr)
>2^{-n}2^r\mathcal{H}\Bigl(\frac{T_k}{2^r}\Bigr)
\geq2^{-n}\mathcal{H}(T_k)
=\Lambda.
\]
Therefore
\begin{equation}
R_{N+k}^{<}(\lambda)
=\lambda-\frac1{T_k}\quad\text{for } k\geq k_0.
\label{eq:strict-best-extension}
\end{equation}

Under either denominator convention, define
\[
n_0 :=
\begin{cases}
N & \text{if } \sigma=\leq,\\
N+k_0 & \text{if } \sigma=<
\end{cases}
\]
and
\[
U_j :=
\begin{cases}
T_j & \text{if } \sigma=\leq,\\
T_{k_0+j} & \text{if } \sigma=<
\end{cases}
\]
for every $j\geq0$.
Equations \eqref{eq:nondecreasing-best-extension} and \eqref{eq:strict-best-extension}, in their respective cases, give
\[
R_{n_0}^{\sigma}(\lambda)
=\lambda-\frac1{U_0}.
\]
For every $m\geq1$,
\begin{equation}\label{eq:final-extension-decomposition}
R_{n_0+m}^{\sigma}(\lambda)
=\lambda-\frac1{U_m}
=R_{n_0}^{\sigma}(\lambda)
+\sum_{j=0}^{m-1}\frac1{U_j+1}.
\end{equation}
Lemma~\ref{lem:unit-fraction-best} identifies the sum in \eqref{eq:final-extension-decomposition} with
$R_m^{\sigma}(1/U_0)$.
Since
\[
\frac1{U_0}
=\lambda-R_{n_0}^{\sigma}(\lambda),
\]
equation \eqref{eq:main-decomposition} follows, and Lemma~\ref{lem:unit-fraction-best} shows that the tuple formed by the final $m$ greedy denominators is the unique maximizing tuple for the remaining unit fraction. The prefixes of the constructed sequence at lengths $n_0$ and $n_0+m$ supply the compatible maximizing tuples asserted in the theorem.
This proves Theorem~\ref{thm:main}.
\end{proof}

%%%%%

\section{Proof of Theorem~\ref{thm:asym}}
\label{sec:asymptotic-rigidity}

The preceding Bellman argument also gives the asserted strengthening of Corollary~\ref{cor:li-tang}. Let $\lambda=p/q>0$ be in lowest terms and put $\Lambda:=\mathcal{B}_{\leq}(p,q,0)$.
If a terminal decomposition with $m$ selected denominators and terminal remainder $1/T$ is extended by the greedy unit fraction tail into $\lambda=\sum_n 1/c_n$, then Lemma~\ref{lem:payoff-construction} gives
\begin{equation}\label{eq:terminal-ray-growth}
\log\left(\lim_{n\to\infty}c_n^{2^{-n}}\right)
=2^{-m-1}\mathcal{H}(T).
\end{equation}
Indeed,
\[
c_{m+j+1}=\iterPhi{j}(T)+1\quad\text{for }j\geq0.
\]
Since the sequence $(b_n)_{n\geq1}$ comes from a maximizing terminal decomposition, it follows that
\begin{equation}\label{eq:maximal-ray-growth}
\log\left(\lim_{n\to\infty}b_n^{2^{-n}}\right)
=\frac12\Lambda.
\end{equation}
Every nondecreasing expansion of $\lambda$ with an eventual greedy unit fraction tail determines a terminal decomposition by cutting sufficiently far down that tail. Consequently, \eqref{eq:terminal-ray-growth} and the definition of the Bellman function show that the corresponding limit of $c_n^{2^{-n}}$ is at most $\lim_{n\to\infty}b_n^{2^{-n}}$, with equality precisely when the terminal decomposition is maximizing.

We need one further consequence of the attainment argument. Throughout this paragraph we consider only the nondecreasing denominator convention. Call an infinite denominator sequence from an admissible state $s$ a \emph{maximizing ray} if it is obtained by extending a maximizing terminal decomposition from $s$ by the greedy unit fraction tail of its terminal remainder. Different stopping points producing the same infinite sequence are regarded as belonging to the same ray. We claim that every admissible state has only finitely many maximizing rays.

We first prove this claim for all admissible states $s=(P,Q,L)$ with $P\leq Q$, by strong induction on $P$. Suppose first that $T:=Q/P\in\mathbb{N}$.
Then \eqref{eq:bellman-boundary-condition} gives
\[
\mathcal{B}_{\leq}(s)=\mathcal{H}(T).
\]
Consider a maximizing terminal decomposition of length $r$ with terminal remainder $1/T'$. Its optimality gives
\[
2^{-r}\mathcal{H}(T')=\mathcal{H}(T),
\]
and hence property~\ref{item:payoff-H1} yields
\[
\mathcal{H}(T')
=2^r\mathcal{H}(T)
=\mathcal{H}\bigl(\iterPhi{r}(T)\bigr).
\]
Strict monotonicity of $\mathcal{H}$ therefore implies
\[
T'=\iterPhi{r}(T).
\]
If $r\geq1$, the selected denominators consequently attain
\[
\frac1T-\frac1{\iterPhi{r}(T)}
=R_r^{\leq}\Bigl(\frac1T\Bigr),
\]
so Lemma~\ref{lem:unit-fraction-best} forces them to be the first $r$ greedy denominators, while the case $r=0$ is immediate. Thus every maximizing stopping point produces the same infinite greedy ray, and there is precisely one maximizing ray from $s$.

Now suppose that $Q/P\notin\mathbb{N}$, and assume the claim for every admissible state whose cleared numerator is smaller than $P$. Every maximizing terminal decomposition from $s$ must have a first descent block: otherwise Lemma~\ref{lem:bellman-transition}\,(i) would bound its value by $\mathcal{H}(Q/P)$, contradicting \eqref{eq:bellman-strict-obstacle}. Choose $W$ and $M$ as in the proof of Lemma~\ref{lem:bellman-attainment}. The estimates \eqref{eq:first-descent-branch-bound} and \eqref{eq:attainment-cutoff} show that the first descent block of a maximizing decomposition has length $r<M$. For every $r<M$ and every $1\leq d<P$, the argument based on \eqref{eq:fixed-block-equation} gives only finitely many first descent blocks of length $r$ whose final cleared numerator is $d$. Hence only finitely many first descent blocks can begin a maximizing terminal decomposition.

Let $\mathbf{x}$ be one such block, put $s':=s[\mathbf{x}]$, and let $V'$ be the value of the terminal suffix following $\mathbf{x}$. If the whole terminal decomposition is maximizing, then \eqref{eq:stopped-bellman-equation} gives
\[
\mathcal{B}_{\leq}(s)
=2^{-r(\mathbf{x})}V'
\leq
2^{-r(\mathbf{x})}\mathcal{B}_{\leq}(s')
\leq
\mathcal{B}_{\leq}(s).
\]
Equality therefore holds throughout, so the suffix is itself a maximizing terminal decomposition from $s'$. The state $s'$ is admissible and has cleared numerator $d<P$. By the induction hypothesis, only finitely many maximizing rays continue from $s'$. Since there are only finitely many possible initial blocks $\mathbf{x}$, there are only finitely many maximizing rays from $s$. This completes the strong induction.

Finally, suppose that $P>Q$. Admissibility then forces $L=0$. Choose $W$ and the cutoff $M$ as in the proof of Lemma~\ref{lem:global-bellman-attainment}. The estimate \eqref{eq:entrance-branch-bound} shows that the entrance block of any maximizing terminal decomposition has length $r<M$. Moreover, \eqref{eq:entrance-last-denominator} gives
\[
x_i\leq x_r
<\frac{2^r}{P/Q}
<2^M,
\]
so there are only finitely many possible entrance blocks. If $s_r$ is the successor after such a block, the same equality argument, now using \eqref{eq:entrance-bellman-equation}, shows that the remaining suffix must be maximizing from $s_r$. Since $s_r$ is admissible and satisfies $P_r<Q_r$, the preceding part of the argument shows that only finitely many maximizing rays continue from $s_r$. It follows that there are only finitely many maximizing rays from $(P,Q,0)$. In particular, the initial state $(p,q,0)$ has only finitely many maximizing rays.

Moreover, all of these maximizing infinite sequences are eventually equal. Indeed, suppose that two maximizing terminal decompositions have lengths $m\leq m'$ and terminal denominators $T$ and $T'$. Equality of their values, property~\ref{item:payoff-H1}, and strict monotonicity of $\mathcal{H}$ give
\[
\mathcal{H}(T')
=2^{m'-m}\mathcal{H}(T)
=\mathcal{H}\bigl(\iterPhi{(m'-m)}(T)\bigr),
\]
i.e.,
\[
T'=\iterPhi{(m'-m)}(T).
\]
Their greedy tails therefore agree from index $m'+1$ onward. In particular, every maximizing infinite sequence is eventually equal to $(b_n)_{n\geq1}$.

Now let $(a_n)_{n\geq1}$ be as in Theorem~\ref{thm:asym}. For each $k\geq1$, the rational tail
\[
\mu_k:=\lambda-\sum_{i=1}^k\frac1{a_i}
\]
is positive. Apply the Bellman construction in the nondecreasing case of Theorem~\ref{thm:main} to $\mu_k$, and denote the resulting compatible eventually greedy expansion by $(d_j^{(k)})_{j\geq1}$. Arrange the multiset union of $a_1,\ldots,a_k$ and all the $d_j^{(k)}$ in nondecreasing order, obtaining a sequence $(c_n^{(k)})_{n\geq1}$. Since $d_j^{(k)}\to\infty$, one has $c_{k+j}^{(k)}=d_j^{(k)}$ for all sufficiently large $j$. Thus $(c_n^{(k)})_{n\geq1}$ has an eventual greedy unit fraction tail, has reciprocal sum $\lambda$, and contains $a_1,\ldots,a_k$ with their multiplicities. Cutting sufficiently far down that tail, at an index $M$ for which $c_M^{(k)}-1>c_{M-1}^{(k)}$, gives a legal terminal decomposition with terminal denominator $c_M^{(k)}-1$.

The following truncation comparison is adapted from the proof of \cite[Theorem~1.12]{LiTangSylvester}:
\begin{equation}\label{eq:truncation-root-comparison}
\liminf_{n\to\infty}a_n^{2^{-n}}
\leq
\lim_{n\to\infty}\bigl(c_n^{(k)}\bigr)^{2^{-n}}.
\end{equation}
For completeness, write $e_j:=a_{k+j}$ and $D_k:=\lim_{j\to\infty}(d_j^{(k)})^{2^{-j}}$. If $\liminf_{j\to\infty}e_j^{2^{-j}}>D_k$, then $e_j>d_j^{(k)}$ for all sufficiently large $j$. On the other hand, every sufficiently long prefix of $(d_j^{(k)})_{j\geq1}$ is a best underapproximation of $\mu_k$, and hence
\[
\sum_{j=1}^N\frac1{e_j}
\leq
\sum_{j=1}^N\frac1{d_j^{(k)}}
\]
for all sufficiently large $N$. The assumed eventual inequality $e_j>d_j^{(k)}$ also gives
\[
\sum_{j>N}\frac1{e_j}
<
\sum_{j>N}\frac1{d_j^{(k)}}
\]
for all sufficiently large $N$, and adding this to the preceding prefix inequality would contradict the fact that both reciprocal series sum to $\mu_k$. Thus
\[
\liminf_{j\to\infty}e_j^{2^{-j}}\leq D_k.
\]
Moreover,
\[
\liminf_{j\to\infty}e_j^{2^{-j}}
=\left(\liminf_{n\to\infty}a_n^{2^{-n}}\right)^{2^k},
\quad
\lim_{n\to\infty}\bigl(c_n^{(k)}\bigr)^{2^{-n}}
=D_k^{2^{-k}},
\]
where the second identity follows because inserting the $k$ fixed denominators shifts the eventual sequence $(d_j^{(k)})_{j\geq1}$ by exactly $k$ places. This proves \eqref{eq:truncation-root-comparison}.

By \eqref{eq:terminal-ray-growth}, \eqref{eq:maximal-ray-growth}, and \eqref{eq:truncation-root-comparison},
\[
\liminf_{n\to\infty}a_n^{2^{-n}} \leq \lim_{n\to\infty}b_n^{2^{-n}}
\]
for every $k$. If the strict inequality in Theorem~\ref{thm:asym} fails, then equality holds throughout, so every $(c_n^{(k)})_{n\geq1}$ is one of the finitely many maximizing infinite sequences described above. One fixed maximizing sequence must therefore occur for arbitrarily large $k$. It contains every $a_n$, with multiplicity; since its reciprocal sum is also $\lambda$, it contains no additional terms. The two nondecreasing sequences are identical, and the preceding argument shows that $a_n=b_n$ for all sufficiently large $n$, as required.
This proves Theorem~\ref{thm:asym}.

Under the uniqueness hypothesis of Corollary~\ref{cor:li-tang}, the eventual equality alternative forces equality of the entire sequences: compare a sufficiently long common tail and then use uniqueness of the corresponding maximizing prefix. Thus Theorem~\ref{thm:asym} also recovers Corollary~\ref{cor:li-tang}.

%%%%%

\section{Elaboration of Example~\ref{ex:Liouville}}
\label{sec:example}

For $n\geq1$ put
\[
b_n:=2^{n!}
\]
and then for $n\geq0$ define
\[
S_n:=\sum_{j=1}^n\frac1{b_j},\quad
\tau_n:=\theta-S_n.
\]
We can write $S_n$ as a single fraction, $S_n=P_n/b_n$, with numerator
\[
P_n:=\sum_{j=1}^n2^{n!-j!}.
\]
The integer $P_n$ is odd because its final summand is $1$ and all preceding terms are even. Thus $b_n$ is the reduced denominator of $S_n$.

We will also use the separation estimate
\begin{equation}\label{eq:liouville-tail-separation}
\tau_n
<\frac1{(b_n-1)b_n^n}
\leq\frac1{b_n(b_n-1)^n}
\quad\text{for } n\geq1.
\end{equation}
Indeed, $k!\geq n!k$ for $k\geq n+1$, with strict inequality when $k\geq n+2$, and therefore
\[
\tau_n
<\sum_{k=n+1}^{\infty}\frac1{b_n^k}
=\frac{1/b_n^{n+1}}{1-1/b_n}
=\frac1{(b_n-1)b_n^n}.
\]
The same estimate gives $0<\theta-S_n<b_n^{-n}$, so for every positive integer $m$, taking $n>m$ gives $0<\theta-S_n<b_n^{-m}$. Since $S_n=P_n/b_n$ is reduced, $\theta$ is indeed a Liouville number.

We now prove by induction the stronger assertion that every nondecreasing tuple
\[
2\leq a_1\leq\cdots\leq a_n,
\quad
A:=\sum_{j=1}^n\frac1{a_j}<\theta,
\]
satisfies $A\leq S_n$, with equality if and only if $(a_1,\ldots,a_n)=(b_1,\ldots,b_n)$. In particular,
\begin{equation}\label{eq:liouville-best-values}
R_n^{\leq}(\theta)=S_n\quad\text{for } n\geq1.
\end{equation}

For $n=1$, we only need to observe $1/2<\theta<1$, so $1/a_1\leq1/2=S_1$, with equality if and only if $a_1=2=b_1$.

Let $n\geq2$, assume the assertion for $n-1$ terms, and put $A':=\sum_{j=1}^{n-1}1/a_j$. Since $A'<\theta$, the induction hypothesis gives $A'\leq S_{n-1}$.
If $A>S_n$, then
\[
\frac1{a_n}=A-A'
>S_n-S_{n-1}
=\frac1{b_n}.
\]
Consequently,
\[
a_j\leq b_n-1\quad\text{for } 1\leq j\leq n.
\]
Since $b_j\mid b_n$ for $j\leq n$, the positive number $A-S_n$, when written over the common denominator $b_n\prod_{j=1}^n a_j$, has a positive integral numerator. Hence, by \eqref{eq:liouville-tail-separation},
\[
A-S_n
\geq\frac1{b_n\prod_{j=1}^n a_j}
\geq\frac1{b_n(b_n-1)^n}
>\tau_n.
\]
This contradicts $A<\theta=S_n+\tau_n$, and therefore $A\leq S_n$.

Suppose now that $A=S_n$. The inequality $A'\leq S_{n-1}$ now yields
\[
\frac1{a_n}=S_n-A'
\geq S_n-S_{n-1}
=\frac1{b_n},
\]
so $a_n\leq b_n$. Since $b_n$ is the reduced denominator of $S_n$, the identity
\[
S_n=\sum_{j=1}^n\frac1{a_j}
\]
implies that $b_n$ divides the least common multiple of $a_1,\ldots,a_n$.
As $b_n$ is a power of $2$, it divides at least one of the denominators $a_j$. But $a_j\leq a_n\leq b_n$, so $a_j=b_n$ and, by monotonicity, $a_n=b_n$. Removing the final term gives $A'=S_{n-1}$, and the induction hypothesis shows that
\[
(a_1,\ldots,a_{n-1})=(b_1,\ldots,b_{n-1}).
\]
This completes the induction and proves both \eqref{eq:liouville-best-values} and uniqueness in the nondecreasing denominator convention. Since $(b_1,\ldots,b_n)$ is strictly increasing, it is also the unique maximizing tuple in the strictly increasing convention, and $R_n^{<}(\theta)=S_n$ for every $n\geq1$.

It remains to identify the greedy choices. Immediately before $1/b_n$ is selected, the remainder is
\[
\theta-S_{n-1}=\frac1{b_n}+\tau_n.
\]
Estimate \eqref{eq:liouville-tail-separation} yields
\[
\frac1{b_n}
< \theta-S_{n-1}
< \frac1{b_n}+\frac1{b_n(b_n-1)}
= \frac1{b_n-1}.
\]
Thus $G(\theta-S_{n-1})=b_n$ for every $n\geq1$, where $G$ was defined in \eqref{eq:def-of-G}. Hence the unique maximizing tuples are generated greedily from the first term onward.

%%%%%

\section*{Declaration of AI usage}
Key steps in the proof of Theorem~\ref{thm:main} (the choice and the construction of the payoff function, the existence of optimal terminal decompositions, and the study of competing nongreedy underapproximations) were provided by OpenAI's GPT-5.6 Sol. The authors placed them in the natural context of optimal control theory and rewrote them accordingly, using the ChatGPT-generated outputs as a starting point. Figure~\ref{fig:graph-of-H} was also created with GPT-5.6 Sol. The final statements of the results, the complete proofs presented here, and the remaining manuscript text were written by the authors, who take full responsibility for correctness and originality of the paper.

For transparency, we also describe more precisely how ChatGPT was used. We posed the problem independently through two separate ChatGPT accounts on July 13, 2026. In one conversation, ChatGPT reported that it was unable to solve the problem; the corresponding transcript is publicly available as the
\href{https://github.com/QuanyuTang/eventually-greedy-egyptian-candidate-proof/blob/main/02_GPT_proof_attempt_unsuccessful.pdf}
{\texttt{unsuccessful attempt}}.
In the other conversation, ChatGPT produced a candidate approach that appeared capable of resolving the problem; that transcript is available as the
\href{https://github.com/QuanyuTang/eventually-greedy-egyptian-candidate-proof/blob/main/01_GPT_proof_attempt_successful.pdf}
{\texttt{successful attempt}}.
We subsequently asked ChatGPT to organize the reasoning developed in the latter conversation into a preliminary proof draft. This GPT-generated draft remains publicly available as the
\href{https://github.com/QuanyuTang/eventually-greedy-egyptian-candidate-proof/blob/main/main2.pdf}
{\texttt{initial proof draft}}.

We emphasize that this initial draft was far from being a self-contained and comprehensible paper. At best, it was an essentially correct but undigested and barely readable proof. Considerable mathematical verification, restructuring, exposition, and rewriting by the authors were required to produce the present manuscript.

%%%%%

\section*{Acknowledgments and funding}
Q.\,T. would like to thank Zheng Li, Yinchen Liu, and Shengtong Zhang for helpful discussions.
Both authors are indebted to Wouter van Doorn for suggesting the inclusion of Theorem~\ref{thm:asym} and to Xiao Hu for Remark~\ref{rem:Hinterpretation}.
The authors are also grateful to Thomas Bloom for creating and maintaining the website \emph{Erd\H{o}s Problems} \cite{Bloom}.

V.\,K. is supported in part by the Croatian Science Foundation under the project HRZZ-IP-2022-10-5116 (FANAP) and in part by the European Union -- NextGenerationEU through the National Recovery and Resilience Plan 2021--2026, via an institutional grant of the University of Zagreb Faculty of Science, IK IA 1.1.3, Impact4Math.

%%%%%

\bibliographystyle{plainurl}
\bibliography{underapproximations}

\end{document}